\definecolor{MyLightGrey}{rgb}{0.95,0.95,0.95}
\definecolor{MyMediumGrey}{rgb}{0.5,0.5,0.5}
\definecolor{MyDarkGrey}{rgb}{0.3,0.3,0.3}
\Crefname{ALC@unique}{Line}{Lines} %
\DeclareMathOperator{\Tr}{trace}
\DeclareMathOperator{\diagm}{diag}
\DeclareMathOperator{\ones}{\mathbf{1}}
\title{An edge centrality measure based on the Kemeny constant\thanks{Submitted to the editors DATE}\funding{This work has been partially supported by University of Pisa’s project PRA\_2020\_61, and by GNCS of INdAM}}
\author{D.~Altafini\thanks{Dipartimento di Ingegneria dell'Energia, dei Sistemi, del Territorio e delle Costruzioni, Universit\`a di Pisa, Italy} \and D.A.~Bini\thanks{Dipartimento di Matematica, Universit\`a di Pisa, Italy} \and V.~Cutini\thanks{Dipartimento di Ingegneria dell'Energia, dei Sistemi, del Territorio e delle Costruzioni, Universit\`a di Pisa, Italy}
 \and B.~Meini\thanks{Dipartimento di Matematica, Universit\`a di Pisa, Italy} \and F.~Poloni\thanks{Dipartimento di Informatica, Universit\`a di Pisa, Italy}}
\begin{document}

\maketitle

\begin{abstract} 
A new measure $c(e)$ of the centrality of an edge $e$ in an undirected graph $G$ is introduced. It is based on the variation of the Kemeny constant of the graph after removing the edge $e$. The new measure is designed in such a way that the Braess paradox is avoided.
A numerical method for computing $c(e)$ is introduced and a regularization technique is designed in order to deal with cut-edges and disconnected graphs. Numerical experiments performed on synthetic tests and on real road networks show that this measure is particularly effective in revealing bottleneck roads whose removal would greatly reduce the connectivity of the network.
\end{abstract}

\section{Introduction}
In network analysis, several measures of the importance of an edge of a graph, having different modellistic meanings and mathematical formulations, have been introduced. For instance, in \cite{bb:matrix,estrada:book} the
communicability between two nodes $i$, $j$ of a graph $G$ is defined as the $(i,j)$-th entry in the exponential of the adjacency matrix of $G$. 
The exponential of a matrix is also at the basis of the definition of importance given in \cite{dom:edge}. Other measures based on the computation of matrix functions are introduced in \cite{bk:15}, where a parameterized node centrality measure is introduced, and in \cite{bek:13} where directed networks are analyzed. 
In \cite{ck:11} the idea of considering the variation of the Kemeny constant, when an edge is removed from a graph, is considered.

In this, paper, following \cite{ck:11}, we introduce and analyze a new definition of centrality based on a modified variation of the Kemeny constant.

Let $P\in\mathbb{R}^{n\times n}$ be the transition matrix of a finite irreducible Markov chain, let $\pi=(\pi_k)_{k=1,\ldots,n}$ be its invariant measure, so that  $\pi^T P = \pi^T$ and $\pi^T \mathbf{1}=1$, where $\mathbf{1}=(1,1,\ldots,1)^T$.
The \emph{Kemeny constant} $K(P)$ is defined as the average first-passage time from a predetermined state $i\in\{1,\ldots,n\}$ to a state $j\in\{1,\ldots,n\}$ drawn randomly according to the probability distribution $\pi$. It is a surprising but well-studied fact that this definition does not depend on $i$ ~\cite{kemeny:book}.

Given a connected undirected graph $G = (V,E)$,  where $V$ is the set of vertices and $E$ the set of edges (possibly with weights), 
denote by $A$ the associated adjacency matrix.
The Kemeny constant of the graph $G$ is defined as $K(G) := K(P)$, where $P$ is the stochastic matrix $P=D^{-1}A$, with  $D = \mathrm{diag}(d)$,  $d=A\mathbf{1}$, and $\mathrm{diag}(v)$  denotes the diagonal matrix having the entries of the vector $v$ on the diagonal.
The Kemeny constant gives a global measure of the non-connectivity of a network \cite{BH19,ck:11,ck:12}. Indeed, if $G$ is not connected then the Kemeny constant cannot be defined or, in different words, it takes the value infinity. 

 Following the idea of \cite{ck:11}, we may formally define the Kemeny-based {\em centrality score} $c(e)$ of  the edge $e$ as 
\[
c(e) := K((V, E \setminus \{e\})) - K((V, E)),
\]
i.e., the change of the connectivity of the graph measured by the Kemeny constant, when the edge $e$ is removed from the graph itself. This quantity is well defined assuming that also $(V, E \setminus \{e\})$ is connected. Recall that an edge such that $(V, E \setminus \{e\})$ is disconnected is known as a \emph{cut-edge} in graph theory.

We show that, in matrix form, the value of $c(e)$ can be given in terms of the eigenvalues of the symmetric matrix $S=D^{-\frac12}AD^{\frac12}$ and of the eigenvalues of $S+C$, where $C$ is a symmetric correction of rank 2. 
A drawback of this definition of centrality score is that there exist graphs where $c(e)$ is negative for some $e$, an elementary example is shown in Section \ref{sec:disc}. 
In the literature, this fact is known as the Braess paradox \cite{ck:11}, \cite{braess}.
Its matrix explanation is that the correction $C$ is not positive semi-definite.

To overcome this drawback we propose a modified centrality measure, which is nonnegative for any graph and for any edge $e$. The underlying idea consists in modifying the correction $C$ in such a way that the new correction $\widehat C$ is a positive semi-definite matrix of rank 1. From the model point of view, this consists in replacing the edge $e=(i,j)$ with the loops $(i,i)$ and $(j,j)$. More precisely, 
the centrality score of $e$ is modified as follows
\[
c(e) := K((V, E_{i,j})) - K((V, E)),
~~~E_{i,j}=\left( E\setminus\{e\}\right) \cup \{ (i,i),(j,j)\}.
\]
Since the eigenvalues of the matrix $S+\widehat C$ are greater than or equal to the corresponding eigenvalues of $S$, then $c(e)\ge 0$ for any $e$ and for any graph. This guarantees that the Braess paradox is not encountered.
 
This definition cannot be applied in the case where $e$ is a cut-edge, i.e., $(V,E_{i,j})$ is not connected; in fact, in this case, the definition would yield $c(e)=\infty$. To overcome this drawback, we introduce the concept of {\em regularized centrality score} $c_r(e)$, depending on a regularization parameter $r>0$. The idea is to replace the Laplacian matrix $D-A$ with the regularized Laplacian matrix $(1+r)D-A$ in the formulas that give the Kemeny constant. If $e$ is not a cut-edge, then $\lim_{r\to 0} c_r(e)=c(e)$; otherwise, if $e$ is a cut-edge then $\lim_{r\to 0} c_r(e)=\infty$; indeed, expressing the Kemeny constant in terms of the eigenvalues of $P$, one sees that it contains a term $r^{-1}$. For cut-edges, the quantity $r^{-1} - c_r(e)$ is nonnegative and has a finite limit for $r\to 0$; this suggests the following definition of a {\em filtered} Kemeny-based centrality score
 \[
 \tilde{c}_r(e):=\left\{\begin{array}{ll}
 c_r(e)&\hbox{if $e$ is not a cut-edge,}\\
r^{-1} - c_r(e)&\hbox{if $e$ is a cut-edge.}
\end{array}\right. 
\]
The modified measure defined in this way is always non-negative, and seems particularly effective in highlighting bottlenecks in road networks, or so-called {\em weak ties}~\cite{granovetter} that bridge different clusters.


We provide efficient algorithms implementing the computation  of the score either of a single edge, or of all the edges of a graph. The main tools in the algorithm  design are the Sherman-Woodbury-Morrison formula and the Cholesky factorization of the regularized Laplacian matrix $(1+r)D-A$.
  
Our algorithms have been tested both on synthetic graphs and on graphs representing real road networks, in particular, we have considered the maps of Pisa and of the entire Tuscany.
From our numerical experiments, reported in the paper, it turned out that this measure is robust, effective, and realistic from the model point of view, moreover, its computation is sufficiently fast even for large road networks.
Comparisons with other centrality measures
from \cite{estrada:book} 
 have been performed. It turns out that our model, unlike the ones based on PageRank and Betweenness of the dual graph, succeeds in detecting bridges on the river Arno and overpasses over the railroad line as important roads in the Pisa road map.
 The edge betweenness and edge current-flow betweenness are the only two measures (among those considered) that succeed, even though only partially,
 in highlighting important bottleneck roads.
  The CPU time required for the computation of this measure is comparable with that of other betweenness-based measures on planar networks of roads.
More details concerning  applications of the Kemeny-based centrality measure to road networks can be found in \cite{pisa}.
 
The paper is organized as follows. In Section~\ref{sec:kem} we recall some properties of the Kemeny constant. In Section~\ref{sec:cm} the Kemeny-based centrality measure is introduced and a matrix analysis is performed, while in Section~\ref{sec:different} a modified definition is proposed in order to avoid the Braess paradox. The regularized and filtered centrality scores are proposed in Section~\ref{sec:reg}.  Section~\ref{sec:com} is devoted to  computational issues and numerical experiments. Conclusions are drawn in Section~\ref{sec:con}.

\section{The Kemeny constant}\label{sec:kem}
 Let $P$ be the $n\times n$ transition matrix of an irreducible finite Markov chain and let $\pi$ be its steady state vector. Denote by $K(P)$ the Kemeny constant of $P$.
We recall some properties which allow to express the Kemeny constant in terms of the trace of a suitable matrix. Such expressions will be useful 
in the analysis performed in the next sections.

\begin{lemma}[\cite{WanDJV17}]\label{lem:1}
Let $g,h\in\mathbb{R}^{n\times n}$ be column vectors with $h^Tg=1$, $h^T\ones \neq 0$, $\pi^T g \neq 0$. Then, the inverse $Z := (I-P+gh^T)^{-1}$ exists, and
\[
    K(P) = \Tr(Z) - \pi^T Z \ones,
\]
independently of $g,h$.
\end{lemma}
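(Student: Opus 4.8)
The plan is to reduce the expression $\Tr(Z)-\pi^T Z\ones$ to a quantity built from the group inverse of the singular matrix $A := I-P$, whose spectral structure is completely understood. Since $P$ is irreducible, $1$ is a simple eigenvalue of $P$, so $A$ has rank $n-1$ with right null space $\mathrm{span}(\ones)$ (because $A\ones=0$) and left null space $\mathrm{span}(\pi)$ (because $\pi^T A=0$), normalized by $\pi^T\ones=1$. Let $A^\#$ be the group inverse of $A$; I will use the standard facts that $A^\#$ and $A$ share range and null space, so that $A^\#\ones=0$ and $\pi^T A^\#=0$, together with $AA^\#=A^\#A=I-\ones\pi^T$.

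First I would establish that $Z$ exists. If $(A+gh^T)x=0$, then $Ax=-(h^Tx)g$; left-multiplying by $\pi^T$ and using $\pi^T A=0$ gives $(h^Tx)(\pi^T g)=0$, hence $h^Tx=0$ because $\pi^T g\neq 0$. Then $Ax=0$ forces $x=c\ones$, and $0=h^Tx=c\,(h^T\ones)$ forces $c=0$ because $h^T\ones\neq 0$. Thus $A+gh^T$ is nonsingular.

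The core step is to write $Z=(A+gh^T)^{-1}$ explicitly in terms of $A^\#$. Solving $(A+gh^T)z=y$ for an arbitrary $y$, applying $\pi^T$ yields $h^Tz=(\pi^Ty)/(\pi^Tg)$, so that $Az=\bigl(I-\tfrac{1}{\pi^Tg}g\pi^T\bigr)y=:w$ with $\pi^Tw=0$; hence $w$ lies in the range of $A$, the general solution is $z=A^\#w+t\ones$, and $t$ is fixed by the constraint on $h^Tz$. Collecting terms gives, with $\beta:=\pi^Tg$ and $\gamma:=h^T\ones$,
\[
Z = A^\# - \tfrac{1}{\beta}A^\# g\pi^T + \tfrac{1}{\beta\gamma}\ones\pi^T - \tfrac{1}{\gamma}\ones h^T A^\# + \tfrac{1}{\beta\gamma}(h^T A^\# g)\,\ones\pi^T.
\]
I expect this explicit inversion to be the main technical obstacle, although it is routine once the null-space structure of $A$ is exploited.

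Finally I would take the trace and the bilinear form $\pi^T(\cdot)\ones$ of this expression. Every term in which $A^\#$ sits next to $\ones$ or $\pi^T$ vanishes thanks to $A^\#\ones=0$ and $\pi^T A^\#=0$; using $\Tr(uv^T)=v^Tu$ and $\pi^T\ones=1$ one finds $\Tr(Z)=\Tr(A^\#)+\tfrac{1}{\beta\gamma}\bigl(1+h^TA^\#g\bigr)$ and $\pi^T Z\ones=\tfrac{1}{\beta\gamma}\bigl(1+h^TA^\#g\bigr)$, whence
\[
\Tr(Z)-\pi^T Z\ones = \Tr(A^\#),
\]
which is manifestly independent of $g$ and $h$. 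To conclude, I would invoke the classical identity $\Tr(A^\#)=\sum_{k=2}^{n}(1-\lambda_k)^{-1}=K(P)$, where $\lambda_2,\dots,\lambda_n$ are the eigenvalues of $P$ other than $1$; equivalently, the convenient choice $g=\ones$, $h=\pi$ gives $Z=(I-P+\ones\pi^T)^{-1}$ with $Z\ones=\ones$, so that $\pi^T Z\ones=1$ and the formula recovers the standard fundamental-matrix expression $K(P)=\Tr(Z)-1$.
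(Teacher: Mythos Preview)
The paper does not supply a proof of this lemma; it is stated with a citation to \cite{WanDJV17} and then used as a black box. There is therefore no ``paper's own proof'' to compare against.

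Your argument is correct. The invertibility step is clean, the explicit formula for $Z$ in terms of the group inverse $A^\#$ of $I-P$ checks out (one can verify $(A+gh^T)Z=I$ directly from your expression using $AA^\#=I-\ones\pi^T$, $A\ones=0$, $\pi^T A^\#=0$, $A^\#\ones=0$), and the trace and bilinear-form computations are accurate, yielding $\Tr(Z)-\pi^T Z\ones=\Tr(A^\#)$ independently of $g,h$. The identification $\Tr(A^\#)=\sum_{\ell=2}^n (1-\lambda_\ell)^{-1}=K(P)$ is standard and matches the paper's equation~\eqref{K3}.

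One remark: your proof nowhere uses the hypothesis $h^Tg=1$. Invertibility needs only $h^T\ones\neq 0$ and $\pi^T g\neq 0$, and the final identity holds regardless. So your argument in fact establishes a slightly stronger statement than the lemma as quoted; the normalization $h^Tg=1$ is a convenience (it makes $h^T Z g=1$, for instance) rather than a necessity for the trace formula.
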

By setting $g=\ones$, one gets the following corollary.
\begin{corollary} \label{cor:K2}
Let $h$ be a column vector with $h^T\ones=1$; then, $Z = (I-P+\ones h^T)^{-1}$ exists, and
\begin{equation} \label{K2}
    K(P) = \Tr(Z) - 1.    
\end{equation}
\end{corollary}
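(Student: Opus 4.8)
The plan is to specialize Lemma~\ref{lem:1} to the choice $g=\ones$ and then show that the correction term $\pi^T Z\ones$ collapses to $1$. First I would verify that the hypotheses of Lemma~\ref{lem:1} are met for this $g$. The three conditions become $h^T g = h^T\ones = 1$, which is precisely the assumption of the corollary; $h^T\ones = 1 \neq 0$, which follows immediately; and $\pi^T g = \pi^T\ones = 1 \neq 0$, which holds because $\pi$ is an invariant probability distribution and therefore $\pi^T\ones = 1$. With the hypotheses in force, Lemma~\ref{lem:1} guarantees that $Z=(I-P+\ones h^T)^{-1}$ exists and that
\[
K(P) = \Tr(Z) - \pi^T Z\ones .
\]

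The heart of the argument is then to simplify $\pi^T Z\ones$. The key observation is that $\ones$ is a right eigenvector of $Z^{-1}$ with eigenvalue $1$. Indeed, computing directly,
\[
Z^{-1}\ones = (I-P+\ones h^T)\ones = \ones - P\ones + \ones(h^T\ones),
\]
and since $P$ is stochastic we have $P\ones=\ones$, while $h^T\ones=1$ by assumption; hence the right-hand side equals $\ones$. Applying $Z$ on both sides gives $Z\ones=\ones$, so that $\pi^T Z\ones = \pi^T\ones = 1$. Substituting this into the expression from Lemma~\ref{lem:1} yields $K(P)=\Tr(Z)-1$, which is~\eqref{K2}.

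There is no serious obstacle here, since the corollary is an elementary specialization of the preceding lemma. If one step deserves to be singled out, it is recognizing that the specific choice $g=\ones$ makes $\ones$ an eigenvector of $Z^{-1}$ (and hence of $Z$), which is exactly what causes the vector correction $\pi^T Z\ones$ to reduce to the scalar $1$. This is the mechanism that turns the general trace formula into the cleaner form $\Tr(Z)-1$, and it relies only on the stochasticity of $P$ together with the normalization $h^T\ones=1$.
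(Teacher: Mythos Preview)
Your proof is correct and follows exactly the approach the paper indicates: it simply records ``By setting $g=\ones$, one gets the following corollary,'' and you have supplied the verification that the hypotheses of Lemma~\ref{lem:1} hold and that $\pi^T Z\ones=1$ via $Z^{-1}\ones=\ones$. There is nothing to add.
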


Since $P$ is an irreducible stochastic matrix, then it has a simple eigenvalue equal to 1.
The Kemeny constant can be expressed by means of the eigenvalues different from 1, according to
the following result.

\begin{corollary}
Let $\lambda_1=1, \lambda_2, \dots, \lambda_n$ be the spectrum of $P$. Then,
\begin{equation} \label{K3}
    K(P) = \sum_{\ell=2}^n \frac{1}{1-\lambda_\ell}.
\end{equation}
\end{corollary}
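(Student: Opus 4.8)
The plan is to start from Corollary~\ref{cor:K2}, which gives $K(P) = \Tr(Z) - 1$ for $Z = M^{-1}$ with $M := I - P + \ones h^T$ and any $h$ satisfying $h^T\ones = 1$, and to identify the full spectrum of $M$ explicitly. Since $\Tr(Z)$ equals the sum of the reciprocals of the eigenvalues of $M$ (counted with algebraic multiplicity), it suffices to show that the eigenvalues of $M$ are exactly $1$ together with $1-\lambda_2,\dots,1-\lambda_n$. Once this is established, $\Tr(Z) = 1 + \sum_{\ell=2}^n (1-\lambda_\ell)^{-1}$, and subtracting $1$ yields \eqref{K3}.

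To compute the spectrum of $M$ I would work with its characteristic polynomial and handle the rank-one term $\ones h^T$ via the matrix determinant lemma, which has the advantage of not requiring $P$ to be diagonalizable. Writing $B := (1-\mu)I - P$, I have $M - \mu I = B + \ones h^T$, hence $\det(M - \mu I) = \det(B)\,(1 + h^T B^{-1}\ones)$ whenever $B$ is invertible. The two structural facts that make everything collapse are $P\ones = \ones$ (stochasticity) and $h^T\ones = 1$: the first gives $B\ones = -\mu\ones$, so $B^{-1}\ones = -\mu^{-1}\ones$ and therefore $h^T B^{-1}\ones = -\mu^{-1}$; the second is used at the same step. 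Consequently $\det(M-\mu I) = \det(B)\,(1-\mu^{-1})$, and since $\det(B) = \prod_{\ell=1}^n\big((1-\mu)-\lambda_\ell\big)$ with the $\ell=1$ factor equal to $-\mu$ (because $\lambda_1 = 1$), the factor $-\mu$ cancels the $\mu^{-1}$ and one is left with $\det(M-\mu I) = (1-\mu)\prod_{\ell=2}^n\big((1-\lambda_\ell)-\mu\big)$.

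Reading off the roots then shows the eigenvalues of $M$ are $1$ and $1-\lambda_2,\dots,1-\lambda_n$, as claimed; note that irreducibility guarantees $\lambda_1 = 1$ is simple, so each $1-\lambda_\ell$ with $\ell\ge 2$ is nonzero and the corresponding reciprocal is well defined (consistently with the invertibility of $M$ asserted in Corollary~\ref{cor:K2}). The only point requiring a little care is that the displayed determinant identity is derived solely for those $\mu$ with $B$ invertible (and $\mu\neq 0$), but since both sides are polynomials in $\mu$ the identity extends to all $\mu$ by the polynomial identity theorem; this also legitimizes the cancellation of $\mu^{-1}$ without any genuine singularity at $\mu = 0$. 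I do not expect a substantive obstacle: the chief subtlety is the possible non-diagonalizability of $P$, which the determinant-based argument sidesteps entirely, and the bookkeeping of algebraic multiplicities, which matches automatically because characteristic polynomials are used throughout.
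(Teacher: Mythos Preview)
Your argument is correct. Both your proof and the paper's proof establish that the eigenvalues of $M = I - P + \ones h^T$ are exactly $1$ and $1-\lambda_2,\dots,1-\lambda_n$, and then conclude via $K(P) = \Tr(M^{-1}) - 1$; the difference lies in how that spectral fact is obtained. The paper makes the specific choice $h = \pi$ and writes down a Jordan form $P = WJW^{-1}$ with $W_{:,1} = \ones$ and $W^{-1}_{1,:} = \pi^T$, so that $I - P + \ones\pi^T = W T W^{-1}$ with $T$ upper triangular and $\operatorname{diag}(T) = (1,1-\lambda_2,\dots,1-\lambda_n)$; the trace identity then drops out immediately. Your route instead keeps $h$ generic (subject to $h^T\ones = 1$) and computes the characteristic polynomial of $M$ via the matrix determinant lemma, exploiting $P\ones = \ones$ to force the cancellation that replaces the factor $-\mu$ (from $\lambda_1 = 1$) by $1-\mu$. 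The paper's approach is shorter and gives an explicit similar triangular matrix, at the cost of fixing the particular $h = \pi$; your approach is a bit more computational but has the virtue of working uniformly for every admissible $h$ and of making the handling of non-diagonalizable $P$ and of algebraic multiplicities completely transparent through the characteristic polynomial.
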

\begin{proof}
Take a Jordan form $P = WJW^{-1}$ with $W_{:,1} = \ones$, $W^{-1}_{1,:} = \pi^T$, and $\operatorname{diag}(J) = (1,\lambda_2,\lambda_3,\dots,\lambda_n)$ (reordering $\lambda_2,\dots,\lambda_n$ if necessary). Then, one has
\[
    I-P+\ones\pi^T = W (I-J+e_1e_1^T) W^{-1} = WTW^{-1},
\]
where $T$ is upper triangular with $\operatorname{diag}(T) = (1, 1-\lambda_2, 1-\lambda_3,\dots, 1-\lambda_n)$. Plugging this expression into~\eqref{K2}, we get
\[
K(P) = \Tr(Z)-1 = \Tr(WT^{-1}W^{-1}) - 1 = \Tr(T^{-1}) - 1 = \sum_{\ell=2}^n \frac{1}{1-\lambda_\ell}. 
\]
\end{proof}

\section{A centrality measure based on the Kemeny constant}\label{sec:cm}

Given a connected undirected graph (possibly weighted) $G = (V,E)$, where $V$ denotes the set of vertices and $E$ the set of edges (possibly with weights), one can define its Kemeny constant as $K(G) := K(P)$, with $P=D^{-1}A$, where $A=(a_{i,j})$ is the adjacency matrix of the network, and $D = \diagm(d)$, $d=A\ones$. The Kemeny constant gives a global measure of the connectivity of a network; in fact, small values of the constant correspond to highly connected networks, and large values correspond to a low connectivity.

To obtain a relative measure that takes into account the importance of each edge $e = (i,j)\in E$, we can define the {\em Kemeny-based  centrality score} as 
\begin{equation}\label{eq:cs}
c(e) := K((V, E \setminus \{e\})) - K((V, E)),
\end{equation}
i.e., the change in $K$ obtained by removing the edge $e$. This quantity is well defined assuming that $(V, E \setminus \{e\})$ is still connected, that is, $e$ is not a cut-edge.

Removing one edge $e=(i,j)$ corresponds to zeroing out the entries $a_{i,j}$ and $a_{j,i}$. This leads to the new adjacency matrix 
\begin{equation}\label{eq:corr}
\widehat A=A-a_{i,j}U\begin{bmatrix}
0&1\\1&0
\end{bmatrix}U^T,\quad U = \begin{bmatrix}
    e_i & e_j
\end{bmatrix}\in \mathbb{R}^{n\times 2},
\end{equation}
where $e_i$ and $e_j$ are the $i$-th and the $j$-th columns of the identity matrix $I$, respectively.
This removal changes the transition matrix $P$ into the matrix $\widehat{P}=\widehat D^{-1}\widehat A$, where $\widehat{D} = \diagm(\hat d)$, $\hat d=\widehat{A}\ones$, that differs from $P$ only in rows $i$ and $j$ since $\hat d=d-a_{i,j}(e_i+e_j)$. Hence we have

\begin{equation}\label{eq:phat}
\widehat{P} = P + UV^T
\end{equation}
where
\begin{equation}\label{eq:phat1}
V^T=\begin{bmatrix}
s_i&0\\
0&s_j
\end{bmatrix}U^TA-a_{i,j}\begin{bmatrix}
0&(d_i-a_{i,j})^{-1}\\(d_j-a_{i,j})^{-1}&0
\end{bmatrix}U^T,
\end{equation}
with
$s_i=\frac{a_{i,j}}{d_i(d_i-a_{i,j})},~s_j=\frac{a_{i,j}}{d_j(d_j-a_{i,j})}$.

\begin{theorem} \label{updateformula}
Suppose edge $e$ is not a cut-edge. Then, for the centrality score defined in  \eqref{eq:cs} we have
\begin{equation}
    c(e) = \Tr((I-V^TZU)^{-1}V^TZ^2U),
\end{equation}
where $U=\begin{bmatrix}
e_i&e_j
\end{bmatrix}$, $V^T$ is defined in \eqref{eq:phat1}, and $Z=(I-P+\ones h^T)^{-1}$ is as in Corollary~\ref{cor:K2}.
\end{theorem}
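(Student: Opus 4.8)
The plan is to leverage Corollary~\ref{cor:K2} to convert both Kemeny constants into traces of resolvent-type matrices, and then to reduce the difference to a low-rank update handled by the Sherman--Woodbury--Morrison formula. The crucial structural facts are already in place: $\widehat P = P + UV^T$ by~\eqref{eq:phat}, so the perturbation from $P$ to $\widehat P$ is exactly rank $2$, and Corollary~\ref{cor:K2} writes $K(P) = \Tr(Z) - 1$ with $Z = (I-P+\ones h^T)^{-1}$ for any $h$ with $h^T\ones = 1$.

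The first step exploits that this expression is \emph{independent of $h$}, which is guaranteed by Lemma~\ref{lem:1} applied with $g=\ones$ (the hypotheses $h^T\ones = 1 \neq 0$ and $\pi^T\ones = 1 \neq 0$ hold for both $P$ and $\widehat P$, the latter because $\widehat P$ is still irreducible once $e$ is not a cut-edge). This freedom lets me use the \emph{same} vector $h$ when expressing the Kemeny constant of the perturbed chain. Writing $\widehat Z = (I - \widehat P + \ones h^T)^{-1}$, I then have $K(\widehat P) = \Tr(\widehat Z) - 1$, and therefore
\begin{equation*}
c(e) = K(\widehat P) - K(P) = \Tr(\widehat Z) - \Tr(Z).
\end{equation*}

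The second step is purely algebraic. Substituting $\widehat P = P + UV^T$ gives $\widehat Z = (I - P + \ones h^T - UV^T)^{-1} = (Z^{-1} - UV^T)^{-1}$. Applying the Sherman--Woodbury--Morrison identity to this rank-$2$ correction yields
\begin{equation*}
\widehat Z = Z + ZU(I - V^TZU)^{-1} V^T Z,
\end{equation*}
where the inner $2\times 2$ matrix $I - V^TZU$ is invertible precisely because $\widehat Z$ exists, i.e.\ because $e$ is not a cut-edge. Taking traces, the $\Tr(Z)$ terms cancel and cyclicity of the trace gives $\Tr(\widehat Z) - \Tr(Z) = \Tr\bigl(ZU(I-V^TZU)^{-1}V^TZ\bigr) = \Tr\bigl((I-V^TZU)^{-1}V^TZ^2U\bigr)$, which is the claimed formula.

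I expect the main subtlety to be the justification in the first step rather than the computation: one must confirm that the $h$-independence of Corollary~\ref{cor:K2} permits reusing a single $h$ across both chains, and that the non-cut-edge hypothesis is exactly what guarantees both the irreducibility needed for $\widehat Z$ to exist and the invertibility of $I - V^TZU$ required by the Sherman--Woodbury--Morrison step. Once these are secured, the remainder is a direct application of the formula together with the cyclic property of the trace.
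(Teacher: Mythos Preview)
Your proposal is correct and follows essentially the same route as the paper: use Corollary~\ref{cor:K2} with the same $h$ for both chains to write $c(e)=\Tr(\widehat Z)-\Tr(Z)$, apply the Sherman--Woodbury--Morrison identity to $\widehat Z=(Z^{-1}-UV^T)^{-1}$, and finish with the cyclic property of the trace. Your discussion adds a bit more justification (the non-cut-edge hypothesis ensuring irreducibility of $\widehat P$ and hence invertibility of both $\widehat Z$ and $I-V^TZU$) than the paper makes explicit, but the argument is the same.
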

\begin{proof}
 We have
\begin{align*}
\hat{Z} &:= (I-\hat{P}+\ones h^T)^{-1} \\ 
    &= (I-P+\ones h^T - UV^T)^{-1} \\
    &= Z + ZU (I-V^TZU)^{-1}V^TZ,
\end{align*}
where we have used \eqref{eq:phat} and in the last step the Sherman-Woodbury-Morrison matrix identity~\cite[Section 2.1.4]{GvL:book}. We now use~\eqref{K2} 
and write
\begin{align*}
c(e) &= K(\hat{P}) - K(P) = \Tr(\hat{Z})-\Tr(Z) = \Tr(\hat{Z}-Z)\\
    &= \Tr(ZU (I-V^TZU)^{-1}V^TZ)\\
    &= \Tr((I-V^TZU)^{-1}V^TZ^2U),
\end{align*}
using the identity $\Tr(AB)=\Tr(BA)$~\cite[Chapter 1, Exercise 5]{laub:book}.
\end{proof}

Theorem~\ref{updateformula} allows us to compute the centrality score of one edge at essentially the cost of applying the matrix $Z$ to four vectors.

\subsection{A symmetrized formulation}
Observe that $P=D^{-1}A$ is such that $D^\frac12 P D^{-\frac12}=D^{-\frac12}A D^{-\frac12}$ is a symmetric matrix having the same spectrum of $P$. Therefore, in view of Corollary \ref{cor:K2} we may write
\[
k(P)=\Tr(W)-1,\quad W=(I-D^{-\frac12}A D^{-\frac12}+D^\frac12 gh^T D^{-\frac12})^{-1}.
\]
Moreover, choosing $g$ and $h$ such that $g=\ones$, $h=\frac1{\sum_{i=1}^n d_i}d$
yields
\begin{equation}\label{eq:sf1}
k(P) = \Tr(W)-1,\quad W=(I- D^{-\frac12}A D^{-\frac12}+\frac1{\|d\|_1}D^\frac12 \ones \ones^TD^\frac12)^{-1}.
\end{equation}
In the above expression, the matrix $W$ is real symmetric.

The symmetrization of the matrix $\widehat P$ can be easily obtained in a similar manner, that is,
\begin{equation}\label{eq:sf2}
k(\widehat P) = \Tr(\widehat W)-1,\quad \widehat W=(I- \widehat D^{-\frac12}\widehat A \widehat D^{-\frac12}+\frac1{\|\widehat d\|_1}\widehat D^\frac12 \ones \ones^T\widehat D^\frac12)^{-1}.
\end{equation}
Thus, we may write
$
c(e)=\Tr(\widehat W-W)
$, where $\widehat W-W$ is a low rank symmetric matrix. This fact enables us to exploit the properties of the eigenvalues of symmetric matrices like the Courant-Fischer theorem \cite[Chapter III]{bhatia:book}.

\subsection{Disconnected networks and cut-edges}\label{sec:disc}

If $P$ is \emph{reducible}, according to our earlier definitions, say, definition~\eqref{K3},  we would get $K(P) = \infty$,  since in this case $P$ has at least two eigenvalues equal to $1$. Therefore, one cannot apply the definition of the Kemeny-based centrality score. However, we may extend this definition to reducible matrices by means of a continuity argument as follows.

Assume $P$ reducible and w.l.o.g. assume $P=\hbox{diag}(P_1,P_2,\ldots,P_q)$, where $P_\ell$, $\ell=1,\ldots,q$,  are irreducible stochastic matrices. Clearly, the matrix $P$ has eigenvalues $\lambda_1=\ldots=\lambda_q=1$, and $\lambda_\ell\ne 1$ for $\ell=q+1,\ldots,n$. Observe that the perturbed matrix $P^{(\epsilon)}:=(1-\epsilon)P+\epsilon\frac1n \ones \ones^T$ is stochastic and irreducible for any $0<\epsilon\le 1$, so that $P^{(\epsilon)}$ has only one eigenvalue $\lambda_1(\epsilon)$ equal to 1. Moreover, in view of the Brauer theorem \cite{brauer}, the remaining eigenvalues of $P^{(\epsilon)}$ are given by  $\lambda_\ell(\epsilon)=(1-\epsilon)\lambda_\ell$, $\ell=2,\ldots,n$. Therefore we have 
\[
K(P^{(\epsilon)})=\sum_{\ell=2}^n\frac1{1-\lambda_\ell(\epsilon)}=\frac{q-1}\epsilon+\sum_{\ell=q+1}^n\frac1{1-(1-\epsilon)\lambda_\ell}.
\]

Now consider the matrix $\widehat P^{(\epsilon)}=(1-\epsilon)\widehat P+\epsilon\frac1n \ones\ones^T$ where $\widehat P$ is obtained by removing the edge $(i,j)$. Assume that this edge  belongs to the block $P_s$ for some $1\le s\le q$ and that it is not a cut-edge. That is, the block $\widehat P_s$ obtained after removing the edge is still irreducible. Denote $\hat\lambda_\ell$, $\ell=1,\ldots,n$ the eigenvalues of $\widehat P$ so that 
$\hat\lambda_1=\ldots=\hat\lambda_q=1$, and $\hat\lambda_\ell\ne 1$ for $\ell=q+1,\ldots,n$. By applying once again the Brauer theorem we find that $\widehat P^{(\epsilon)}$ has only one eigenvalue $\hat\lambda_1(\epsilon)=1$,  and the remaining eigenvalues are $\hat\lambda_\ell(\epsilon)=(1-\epsilon)\hat\lambda_\ell$, for $\ell=2,\ldots,n$.  Therefore we have 
\[
K(\widehat P^{(\epsilon)})=\sum_{\ell=2}^n\frac1{1-\hat \lambda_\ell(\epsilon)}=\frac{q-1}\epsilon+\sum_{\ell=q+1}^n\frac1{1- (1-\epsilon)\hat \lambda_\ell},
\]
so that
\[
K(\widehat P^{(\epsilon)})-K(P^{(\epsilon)})=
\sum_{\ell=q+1}^n\left(\frac1{1-(1-\epsilon)\hat\lambda_\ell}-\frac1{1-(1-\epsilon)\lambda_\ell}\right)
,
\]
whence
\[
\lim_{\epsilon\to 0}\left( K(\widehat P^{(\epsilon)})-K(P^{(\epsilon)})\right)=
\sum_{\ell=q+1}^n\left(\frac1{1-\hat\lambda_\ell}-\frac1{1-\lambda_\ell}\right).
\]
Now recall that the removed entries $p_{i,j}$ and $p_{j,i}$  in $\widehat P$  belong to the block $P_s$ so that the eigenvalues of $\widehat P$ different from the eigenvalues of $P$ are those of the block $P_s$, except for the eigenvalue 1. Therefore we have  
\[
\lim_{\epsilon\to 0}\left(K(\widehat P^{(\epsilon)})-K(P^{(\epsilon)})\right)=K(\widehat P_{s})-K(P_{s}).
\]

From the above arguments it is natural to extend the definition of centrality score of an edge $e$ to the case of reducible matrices as follows.

\begin{definition}
Let $P = \diagm(P_1, P_2, \dots, P_q)$, $q>1$, be such that $P_i$ are irreducible stochastic matrices. Let $i,j$ belong to the set of indices of the block $P_s$, and assume that the edge $e=(i,j)$ is not a cut-edge. The Kemeny-based centrality of the edge $e$ is defined as
\[
c(e) = K(\widehat P_s) - K(P_s),
\]
where $\widehat P_s$ is the stochastic matrix obtained from $P_s$ by removing the edge $e=(i,j)$, according to equation \eqref{eq:phat}.
\end{definition}


If $P$ is reducible, i.e., if the graph is disconnected, then we can identify its connected components, locate the block $P_s$ containinmg the edge $(i,j)$ and apply the above definition in order to evaluate $c(e)$. 

If $e$ is a cut-edge, then clearly  $\widehat P$ is reducible so that $K(\widehat P)=\infty$, consequently $c(e)=\infty$.
Several graph-theoretical algorithms exist in literature to compute cut-edges in a graph in time $O(n+m)$~\cite{Tarjan}, and update the connected components of a graph after removing edges~\cite{Shiloach}. However, we prefer to deal with this issue by means of the regularization technique that we will describe later on.


\section{A non-negative Kemeny-based centrality score}\label{sec:different}
Intuitively, one expects that the connectivity of a graph should not increase if an edge is removed from the graph. Therefore, if the Kemeny constant properly describes the non-connectivity of a graph, then it should not decrease if an edge is removed. In terms of definition of centrality score given in \eqref{eq:cs}, we expect that $c(e)\ge 0$.  Unfortunately, it is not so.   

In fact, there are cases where the Kemeny constant of a graph can decrease if an edge is removed,  like in the graph with edges $E=\{(1,2), (1,3), (2,3), (3,4)\}$, shown in Figure~\ref{fig:graph}, on the left. Its Kemeny constant is $\frac{61}{24}\approx 2.54$. Removing the edge $(1,2)$, we get the graph on the right, which has a smaller Kemeny constant, i.e., $2.5$.
That is, the centrality score of the edge $(1,2)$ in this graph is {\em negative}. This fact is known in the literature as the Braess paradox \cite{ck:11}, \cite{braess}.

\begin{figure}\begin{center}
\includegraphics[width=0.45\textwidth]{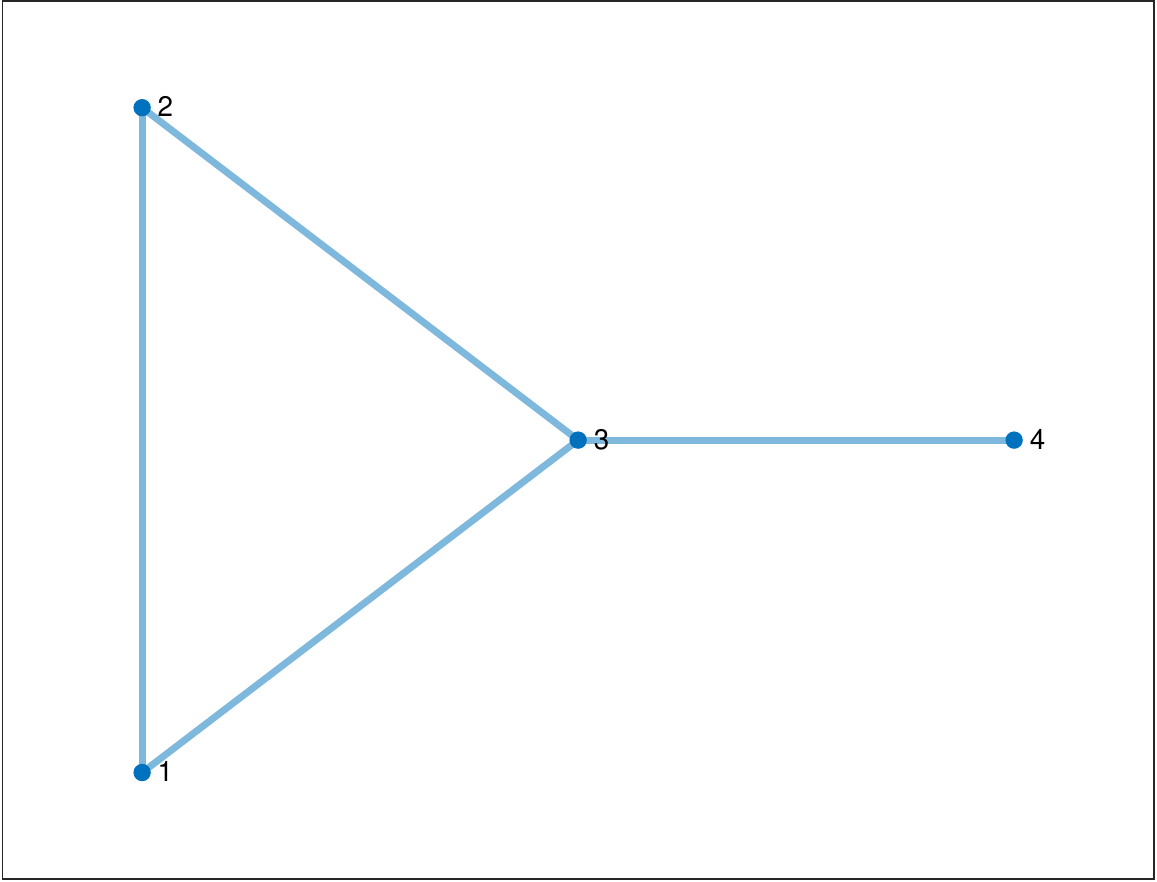}~~\includegraphics[width=0.45\textwidth]{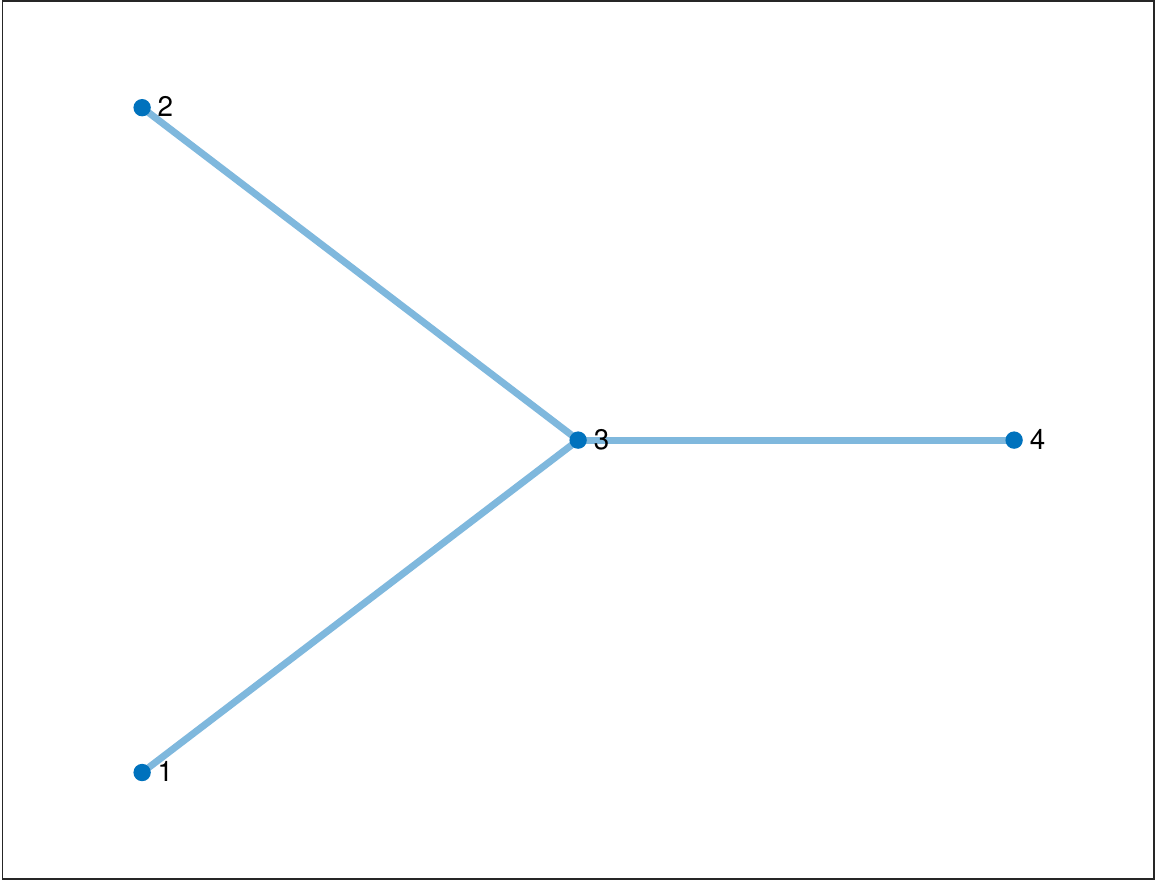}
\end{center}
\caption{\footnotesize The graph on the right is obtained from that on the left by removing the edge $(1,2)$. The two graphs have Kemeny constants $\frac{61}{24}\approx 2.54$ and $2.5$, respectively.}\label{fig:graph}
\end{figure}

In order to overcome this odd behavior of the model, where the measure $c(e)$ can take negative values, we propose a simple modification which also makes  the computation of $c(e)$ an easier task. 

Observe that removing the edge $(i,j)$ from the graph consists in performing a correction to the adjacency matrix $A$ of rank 2 in order to obtain the new matrix $\widehat A$, compare with \eqref{eq:corr}. This correction is such that the vector $d=A\ones$ differs from the vector $\hat d=\widehat A\ones$ in the components $i$ and $j$. On the other hand, defining $\widehat A$ in a different way, by means of the following expression
\begin{equation}\label{eq:corr1}
\widehat A=A+a_{i,j}vv^T,\quad v=e_i-e_j,
\end{equation}
has the effect of zeroing the entries $a_{i,j}$ and $a_{j,i}$ in $A$, and of adding $a_{i,j}$ to the diagonal entries in position $(i,i)$ and $(j,j)$. In terms of graph, this correction consists in removing the edge $(i,j)$ and adding the two loops $(i,i)$ and $(j,j)$ with the same weight $a_{i,j}$. 

The advantage of this correction is that the vectors $d=A\ones$ and $\hat d=\widehat A\ones$ satisfy the identity $\hat d= d$ since $v^T\ones=0$. This property allows us to prove that the centrality score, defined this way, always takes nonnegative values.
In order to prove this property we need to recall the following classical result that is a consequence of the Courant-Fischer minimax theorem \cite[Chapter III]{bhatia:book}.

\begin{lemma}\label{lem:eigval}
Let $A,B,C$ be real symmetric $n\times n$ matrices such that $C=A+B$, and let $\alpha_i,\beta_i,\gamma_i$, $i=1,\ldots,n$ be their eigenvalues, respectively, ordered in nondecreasing order. Then $\alpha_i+\beta_1\le\gamma_i\le\alpha_i+\beta_n$, for $i=1,\ldots,n$.
\end{lemma}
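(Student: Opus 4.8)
The plan is to derive both inequalities directly from the Courant--Fischer minimax characterization of eigenvalues, which is precisely the tool the statement points to. Recall that for a real symmetric $n\times n$ matrix $M$ with eigenvalues $\mu_1\le\mu_2\le\cdots\le\mu_n$ ordered in nondecreasing order, the $i$-th eigenvalue admits the representation
\[
\mu_i=\min_{\substack{S\subseteq\mathbb{R}^n\\ \dim S=i}}\ \max_{\substack{x\in S\\ x\neq 0}}\frac{x^TMx}{x^Tx},
\]
the outer minimum being taken over all $i$-dimensional subspaces $S$. The whole argument rests on combining this formula, applied to $C=A+B$, with the elementary Rayleigh-quotient bounds for $B$.

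First I would fix an arbitrary nonzero vector $x$ and split the Rayleigh quotient of $C$ additively as
\[
\frac{x^TCx}{x^Tx}=\frac{x^TAx}{x^Tx}+\frac{x^TBx}{x^Tx}.
\]
Since $\beta_1$ and $\beta_n$ are the smallest and largest eigenvalues of $B$, the bounds $\beta_1\le x^TBx/x^Tx\le\beta_n$ hold for every $x\neq 0$. Substituting the upper bound, and then taking first the maximum over $x$ in a subspace $S$ and afterwards the minimum over all $i$-dimensional subspaces $S$, the constant $\beta_n$ factors out of both optimizations, giving
\[
\gamma_i\le\min_{\dim S=i}\ \max_{0\neq x\in S}\left(\frac{x^TAx}{x^Tx}+\beta_n\right)=\alpha_i+\beta_n.
\]
The lower bound is entirely symmetric: replacing $\beta_n$ by $\beta_1$ and reversing the inequality yields $\gamma_i\ge\alpha_i+\beta_1$, which together with the previous display completes the proof.

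Since this is a classical result, I expect no genuine obstacle; the only point requiring care is the bookkeeping of the ordering convention. One must use the min-over-$i$-dimensional-subspaces form of Courant--Fischer that is consistent with the nondecreasing ordering of the $\gamma_i$ and $\alpha_i$, and verify that adding the scalar $\beta_n$ (respectively $\beta_1$) inside the quotient commutes with both the inner maximum and the outer minimum --- which it does precisely because it is a constant independent of $x$ and of $S$. A slicker alternative, should one prefer to avoid reciting the minimax formula, is to invoke Weyl's inequalities directly; but since the paper already references the Courant--Fischer theorem, the route above keeps the argument self-contained.
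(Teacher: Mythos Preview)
Your argument is correct; it is exactly the standard derivation of these Weyl-type bounds from the Courant--Fischer minimax characterization. The paper itself does not give a proof of this lemma but only states it as a classical consequence of Courant--Fischer with a reference to Bhatia, so your write-up is entirely in line with (and fills in) what the paper points to.
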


We are ready to prove the following result.

\begin{theorem}\label{th:mon} Let $A$ be the $n\times n$ adjacency matrix of an undirected graph, let
$i,j\in\{1,\ldots,n\}$ be such that the edge $e=(i,j)$ is not a cut-edge, and let $\widehat A$ be the adjacency matrix defined in \eqref{eq:corr1}.
Then for the centrality score defined as
$
c(e)=k(\widehat P)-k(P),
$
we have $c(e)\ge 0$, where $P=D^{-1}A$, $\widehat P=\widehat D^{-1}\widehat A$, $D=\widehat D=\diagm(d)$, $d=A\ones=\widehat A\ones$.
\end{theorem}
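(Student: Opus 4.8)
The plan is to reduce the claim to a comparison of eigenvalues, using the symmetrized formulation together with the spectral expression~\eqref{K3} of the Kemeny constant. First I would exploit the defining feature of the new correction~\eqref{eq:corr1}: since $v=e_i-e_j$ satisfies $v^T\ones=0$, the degree vector is unchanged, $\hat d=d$, whence $D=\widehat D$. Consequently $P$ and $\widehat P$ are similar, respectively, to the symmetric matrices $S=D^{-\frac12}AD^{-\frac12}$ and $\widehat S=D^{-\frac12}\widehat A D^{-\frac12}$, and from~\eqref{eq:corr1} one obtains
\[
\widehat S = S + a_{i,j}\,ww^T, \qquad w=D^{-\frac12}v .
\]
Because $a_{i,j}>0$ and $w\neq 0$, the correction $a_{i,j}ww^T$ is a symmetric positive semidefinite matrix of rank one. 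This is precisely the point where the present definition improves on~\eqref{eq:corr}: there the degree vector changed, the symmetric correction was indefinite of rank two, and no sign could be guaranteed.

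Next I would transfer the comparison to the spectra. Since $\widehat P$ comes from a connected graph (the edge $e$ is not a cut-edge, and adjoining the loops $(i,i),(j,j)$ does not affect connectivity) and $\widehat P\ones=D^{-1}\widehat A\ones=D^{-1}d=\ones$, the matrix $\widehat P$ is stochastic and irreducible; likewise $P$. Hence each of $P,\widehat P$ has the eigenvalue $1$ with multiplicity one, while all remaining eigenvalues are strictly less than $1$. Ordering the eigenvalues of $S$ and $\widehat S$ in nondecreasing order as $\lambda_1\le\dots\le\lambda_n=1$ and $\hat\lambda_1\le\dots\le\hat\lambda_n=1$, Lemma~\ref{lem:eigval} applied with $C=\widehat S$, $A=S$, $B=a_{i,j}ww^T$ (whose eigenvalues are $\beta_1=\dots=\beta_{n-1}=0$ and $\beta_n=a_{i,j}\|w\|^2$) yields $\lambda_\ell\le\hat\lambda_\ell$ for every $\ell=1,\dots,n$.

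Finally I would conclude by monotonicity of the summands in~\eqref{K3}. Reindexing so that the unit eigenvalue is listed last, \eqref{K3} reads $K(P)=\sum_{\ell=1}^{n-1}(1-\lambda_\ell)^{-1}$ and $K(\widehat P)=\sum_{\ell=1}^{n-1}(1-\hat\lambda_\ell)^{-1}$. For $\ell\le n-1$ both $\lambda_\ell$ and $\hat\lambda_\ell$ are strictly less than $1$, and since $t\mapsto (1-t)^{-1}$ is increasing on $(-\infty,1)$, the inequality $\lambda_\ell\le\hat\lambda_\ell$ gives $(1-\lambda_\ell)^{-1}\le(1-\hat\lambda_\ell)^{-1}$ term by term, whence $c(e)=K(\widehat P)-K(P)\ge 0$. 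The only genuinely delicate points, which I would handle explicitly, are the two facts that make this term-by-term comparison legitimate: that the perturbation is positive semidefinite (guaranteed by $D=\widehat D$, i.e. by $v^T\ones=0$) and that the excluded eigenvalue is the simple unit eigenvalue for both matrices (guaranteed by irreducibility of $\widehat P$, i.e. by $e$ not being a cut-edge). Everything else is the routine Weyl monotonicity already packaged in Lemma~\ref{lem:eigval}.
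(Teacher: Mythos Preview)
Your proof is correct and follows essentially the same route as the paper: identify a rank-one positive semidefinite symmetric perturbation (enabled by $\hat d=d$), apply Lemma~\ref{lem:eigval} to obtain $\lambda_\ell\le\hat\lambda_\ell$, and conclude via the monotonicity of $t\mapsto(1-t)^{-1}$ in the eigenvalue expression~\eqref{K3} for the Kemeny constant. The only cosmetic difference is that you compare the eigenvalues of $S=D^{-1/2}AD^{-1/2}$ and $\widehat S$ directly, whereas the paper compares those of the shifted matrices $G=S-\|d\|_1^{-1}D^{1/2}\ones\ones^TD^{1/2}$ and $\widehat G$ coming from~\eqref{eq:sf1}--\eqref{eq:sf2}; since the common rank-one shift cancels, both comparisons reduce to the same PSD perturbation $a_{i,j}D^{-1/2}vv^TD^{-1/2}$.
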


\begin{proof}
Write $c(e)$ in terms of the symmetrized formulation according to \eqref{eq:sf1} and \eqref{eq:sf2}, and get
\begin{equation}\label{eq:loc1}
c(e)=\sum_{\ell=2}^n\frac1{1-\hat \lambda_\ell}-\sum_{\ell=2}^n\frac1{1-\lambda_\ell}
\end{equation}
where $\hat\lambda_\ell$ and $\lambda_\ell$, $\ell=1,\ldots, n$ are the eigenvalues, sorted in non-increasing order, of the 
symmetric matrices 
\[
\widehat G:=\widehat D^{-\frac12}\widehat A\widehat D^{-\frac12}-\frac1{\|\widehat d\|_1}\widehat D^\frac12 \ones\ones^T \widehat D^\frac12 ,\quad\hbox{and}\quad
G:=D^{-\frac12}AD^{-\frac12}-\frac1{\|d\|_1}D^\frac12 \ones\ones^T D^\frac12,
\]
 respectively. 
On the other hand, since
$d=\hat d$ and $D=\widehat D$, we have
$\widehat G=G+a_{i,j}D^{-\frac12}vv^TD^{-\frac12}$. The matrix 
$a_{i,j}D^{-\frac12}vv^TD^{-\frac12}$ has $n-1$ eigenvalues equal to 0 and one eigenvalue equal to $a_{i,j}v^TD^{-1}v=a_{i,j}(d_i^{-1}+d_j^{-1})$. Applying Lemma \ref{lem:eigval} with $A=G$ and $B=a_{i,j}D^{-\frac12}vv^TD^{-\frac12}$ yields the inequality $\lambda_\ell\le\hat\lambda_\ell\le \lambda_\ell+a_{i,j}(d_i^{-1}+d_j^{-1})$. This implies that $c(e)\ge 0$ in view of \eqref{eq:loc1}. 
\end{proof}

Observe that $c(e)$ can be interpreted as the incremental ratio for the increment $\delta_t=1$ at $t=0$ of the function $f(t)=k(P(t))$, for $P(t)=D(t)^{-1}A(t)$, where 
$A(t)=A+ta_{i,j}vv^T$, $D(t)=\diagm(d(t))$, $d(t)=A(t)\ones$. That is,
$c(e)=\frac1{\delta_t} (f(\delta_t)-f(0))$ for $\delta_t=1$.
An interesting question is to evaluate the derivative $\lim_{\delta_t\to 0}\frac1{\delta_t}(f(\delta_t)-f(0))$ of $f(t)$ at $t=0$. We have the following result

\begin{theorem}Under the assumptions of Theorem \ref{th:mon},
let $f(t)=k(P(t))$, for $P(t)=D^{-1}A(t)$, where 
$A(t)=A+ta_{i,j}vv^T$, $D=\diagm(d)$, $d=A(t)\ones=A\ones$, $t\in[0,1]$. 
Let $\lambda_\ell$, $\ell=1,\ldots,n$ be the eigenvalues of $P(0)$, where $\lambda_1=1$. Then 
$0\le f'(0)\le
a_{i,j}(d_i^{-1}+d_j^{-1})\sum_{\ell=2}^n\frac1{(1-\lambda_\ell)^2}$.
\end{theorem}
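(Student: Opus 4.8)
The plan is to differentiate the symmetrized trace expression \eqref{eq:sf1} for the Kemeny constant directly, which bypasses any case analysis on the multiplicities of the eigenvalues $\lambda_\ell$. The structural fact that drives the argument is that $v=e_i-e_j$ satisfies $v^T\ones=0$, so the degree vector is constant along the path, $A(t)\ones=A\ones=d$, and therefore $D$, $D^{\frac12}$ and $\|d\|_1$ do not depend on $t$. Setting $G(t):=D^{-\frac12}A(t)D^{-\frac12}=G_0+t\,a_{i,j}ww^T$, with $G_0:=D^{-\frac12}AD^{-\frac12}$ and $w:=D^{-\frac12}v$, formula \eqref{eq:sf1} reads $f(t)=\Tr(W(t))-1$ where $W(t)=(I-G(t)+c\,pp^T)^{-1}$, and $p:=D^{\frac12}\ones$, $c:=\|d\|_1^{-1}$ are constant in $t$.

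Next I would differentiate the trace. Since $W(t)^{-1}=I-G(t)+c\,pp^T$ has derivative $-a_{i,j}ww^T$, the identity $W'=-W(W^{-1})'W$ together with the cyclic invariance of the trace give
\[
f'(t)=\Tr(W'(t))=a_{i,j}\Tr\!\big(ww^TW(t)^2\big)=a_{i,j}\,w^TW(t)^2w,
\]
so that $f'(0)=a_{i,j}\,w^TW^2w$ with $W:=W(0)$.

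The third step is to read off $w^TW^2w$ spectrally, exactly as in the proof of the spectral formula \eqref{K3} and of Theorem~\ref{th:mon}. The symmetric matrix $G_0$ has $p$ as an eigenvector with eigenvalue $1$ (the symmetric image of the Perron vector $\ones$ of $P$) together with an orthonormal family $u_2,\dots,u_n$ satisfying $G_0u_\ell=\lambda_\ell u_\ell$. The rank-one shift $c\,pp^T$ makes $p$ an eigenvector of $W$ with eigenvalue $1$ and leaves each $u_\ell$ an eigenvector of $W$ with eigenvalue $(1-\lambda_\ell)^{-1}$. The key simplification is that $w\perp p$, because $w^Tp=v^TD^{-\frac12}D^{\frac12}\ones=v^T\ones=0$; hence the component of $w$ along $p$ drops out and
\[
f'(0)=a_{i,j}\sum_{\ell=2}^n\frac{(u_\ell^Tw)^2}{(1-\lambda_\ell)^2}.
\]

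From here both inequalities are immediate. Every summand is nonnegative, so $f'(0)\ge 0$. For the upper bound, Cauchy--Schwarz with $\|u_\ell\|=1$ gives $(u_\ell^Tw)^2\le\|w\|^2=w^Tw=v^TD^{-1}v=d_i^{-1}+d_j^{-1}$ for every $\ell$, and substituting term by term yields $f'(0)\le a_{i,j}(d_i^{-1}+d_j^{-1})\sum_{\ell=2}^n(1-\lambda_\ell)^{-2}$. I expect the only delicate point to be the justification that $t\mapsto W(t)$ is differentiable at $t=0$, i.e.\ that $1$ remains a simple eigenvalue of $G(t)$ so that the resolvent does not blow up; this is precisely where the non-cut-edge hypothesis enters, guaranteeing that $P(t)$ stays irreducible for $t\in[0,1]$, and it is the same fact that would legitimize the alternative route through first-order eigenvalue perturbation without having to treat repeated eigenvalues separately.
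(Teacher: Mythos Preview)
Your argument is correct and, in fact, slightly cleaner than the paper's. The paper proceeds via the difference quotient: writing $f(t)=\sum_{\ell=2}^n(1-\lambda_\ell(t))^{-1}$ for the ordered eigenvalues $\lambda_\ell(t)$ of the symmetrized family, it expands
\[
\frac{f(t)-f(0)}{t}=\sum_{\ell=2}^n\frac{(\lambda_\ell(t)-\lambda_\ell)/t}{(1-\lambda_\ell(t))(1-\lambda_\ell)},
\]
invokes the Courant--Fischer bound $0\le\lambda_\ell(t)-\lambda_\ell\le t\,a_{i,j}(d_i^{-1}+d_j^{-1})$ already used in Theorem~\ref{th:mon}, and lets $t\to 0$. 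Your route instead differentiates the trace formula \eqref{eq:sf1} directly, obtaining the closed form $f'(0)=a_{i,j}\,w^TW^2w$ and then diagonalising; the upper bound then comes from Cauchy--Schwarz on each $(u_\ell^Tw)^2$ rather than from Lemma~\ref{lem:eigval}. What you gain is an exact expression for $f'(0)$ and an automatic proof that the derivative exists (since $W(t)$ is a rational, hence smooth, function of $t$ near $0$), a point the paper's proof leaves implicit. What the paper's approach buys is economy: it simply reuses the eigenvalue inequality already on hand.

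One small remark on your closing paragraph: the non-cut-edge hypothesis is needed so that $f$ is defined on all of $[0,1]$ (in particular at $t=1$), but differentiability at $t=0$ requires only that $W(0)^{-1}$ be invertible, which follows from the irreducibility of $P$ itself. So the ``delicate point'' you flag is not actually delicate at $t=0$.
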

\begin{proof}
Denote $\lambda_i(t)$ the eigenvalues of $G(t):=D^{-\frac12}A(t)D^{-\frac12}-\frac1{\|d\|_1}dd^T$, where $A(t)=A+ta_{i,j}vv^T$. We have
\[
\frac1t(k(P(t))-k(P(0))=\frac1t\sum_{\ell=2}^n
\left(\frac1{1-\lambda_\ell(t)}-\frac1{1-\lambda_\ell}\right)
=\sum_{\ell=2}^n\frac{(\lambda_\ell(t)-\lambda_\ell)/t}{(1-\lambda_\ell(t))(1-\lambda_\ell)}.
\]
Since $\lambda_\ell\le\lambda_\ell(t)\le \lambda_\ell+ta_{i,j}(d_i^{-1}+d_j^{-1})$ (compare with the proof of Theorem \ref{th:mon}), taking the limit for $t\to 0$ yields
\[
f'(t)\le a_{i,j}(d_i^{-1}+d_j^{-1})\sum_{\ell=2}^n\frac1{(1-\lambda_\ell)^2}.
\]\end{proof}

A similar inequality can be proved for $f'(1)$.
Observe that the upper bound to $f'(0)$ given in the above theorem coincides with the value $a_{i,j} (d_i^{-1}+d_j^{-1})$ up to within a constant factor independent of $i$ and $j$. 
This value depends on the out degree of node $i$ and of node $j$ independently of the topology of the graph.
 
Computing the value of $c(e)$ defined in \eqref{eq:corr1} is cheaper than computing the quantity defined in \eqref{eq:corr}. To this regard, we have the following

\begin{theorem}\label{th:9}
Under the assumptions of Theorem \ref{th:mon} we have
\[
c(e)=a_{i,j}v^TD^{-\frac12}W\widehat WD^{-\frac12}v
\]
where $W^{-1}=I-D^{-\frac12}AD^{-\frac12}+\frac1{\|d\|_1}D^\frac12 \ones\ones^T D^\frac12$,
$\widehat{W}^{-1}=W^{-1}-a_{i,j}D^{-\frac12}vv^TD^{-\frac12}$. Moreover,
$\widehat W=W-\tau a_{i,j}WD^{-\frac12}vv^TD^{-\frac12}W$, for $\tau=-1/(1-a_{i,j}v^TD^{-\frac12}WD^{-\frac12}v)$.
\end{theorem}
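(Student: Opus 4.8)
The plan is to derive both assertions from the symmetrized trace identity $c(e)=\Tr(\widehat W-W)$, which follows by combining \eqref{eq:sf1} and \eqref{eq:sf2}, together with the Sherman--Woodbury--Morrison identity. The crucial starting observation is that, under the assumptions of Theorem~\ref{th:mon}, the correction \eqref{eq:corr1} gives $\widehat A=A+a_{i,j}vv^T$ and leaves the degrees unchanged, so $\widehat D=D$ and $\|\widehat d\|_1=\|d\|_1$. Substituting this into the definition of $\widehat W^{-1}$ implicit in \eqref{eq:sf2} and comparing with \eqref{eq:sf1}, every term except the adjacency part cancels, and one reads off at once
\[
\widehat W^{-1}=W^{-1}-a_{i,j}D^{-\frac12}vv^TD^{-\frac12},
\]
which is the first stated inverse relation.

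For the update formula, I would set $u:=D^{-\frac12}v$ and view $\widehat W^{-1}=W^{-1}-a_{i,j}uu^T$ as a rank-one modification of $W^{-1}$. Applying the Sherman--Woodbury--Morrison identity yields $\widehat W=W+\frac{a_{i,j}}{1-a_{i,j}u^TWu}Wuu^TW$, and introducing $\tau=-1/(1-a_{i,j}v^TD^{-\frac12}WD^{-\frac12}v)$ to absorb the scalar denominator rewrites this exactly as $\widehat W=W-\tau a_{i,j}WD^{-\frac12}vv^TD^{-\frac12}W$. This step is essentially mechanical once the inverse relation is in place.

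The main formula requires slightly more care. I would multiply the inverse relation $\widehat W^{-1}=W^{-1}-a_{i,j}uu^T$ on the left by $W$ and on the right by $\widehat W$; after cancelling $WW^{-1}=I$ and $\widehat W^{-1}\widehat W=I$ this produces the clean identity
\[
\widehat W-W=a_{i,j}Wuu^T\widehat W.
\]
Taking traces and applying the cyclic property $\Tr(XY)=\Tr(YX)$ collapses the rank-one structure into a scalar, $c(e)=\Tr(\widehat W-W)=a_{i,j}\,u^T\widehat WWu$. Since $W$ and $\widehat W$ are symmetric, this quadratic form equals its own transpose $u^TW\widehat Wu$, so restoring $u=D^{-\frac12}v$ gives $c(e)=a_{i,j}\,v^TD^{-\frac12}W\widehat WD^{-\frac12}v$ as claimed.

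I expect the only genuine obstacle to be bookkeeping: keeping track of which of the two symmetric factors ($W\widehat W$ versus $\widehat WW$) emerges from the trace manipulation, and confirming that they are interchangeable inside the quadratic form. As a consistency check one may instead substitute the update formula for $\widehat W$ directly into $\Tr(\widehat W-W)$, obtaining $-\tau a_{i,j}v^TD^{-\frac12}W^2D^{-\frac12}v$; equality of the two expressions then reduces to the eigenvector-like identity $\widehat Wu=-\tau Wu$, which follows by applying $\widehat W$ to $u$ and using the defining relation for $\tau$.
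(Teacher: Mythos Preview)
Your proposal is correct and follows essentially the same route as the paper: both start from $c(e)=\Tr(\widehat W-W)$, use $\widehat D=D$ to obtain $W^{-1}-\widehat W^{-1}=a_{i,j}D^{-1/2}vv^TD^{-1/2}$, collapse the resulting rank-one trace by cyclicity, and invoke Sherman--Woodbury--Morrison for the update formula. The only cosmetic difference is that the paper writes the telescoping identity as $\widehat W-W=\widehat W(W^{-1}-\widehat W^{-1})W$, which lands directly on $a_{i,j}u^TW\widehat Wu$ without the extra transpose step you perform to swap $\widehat WW$ for $W\widehat W$.
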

\begin{proof}
By using symmetrization, we have $c(e)=\Tr(\widehat W-W)=\Tr(\widehat W(W^{-1}-\widehat W^{-1})W)$. On the other hand, $W^{-1}-\widehat W^{-1}=a_{i,j}D^{-\frac12}vv^TD^{-\frac12}$, so that $c(e)=\Tr(a_{i,j}\widehat WD^{-\frac12}vv^TD^{-\frac12}W)= a_{i,j}v^TD^{-\frac12}W\widehat WD^{-\frac12}v$. The expression for $\widehat W$ follows from the Sherman-Woodbury-Morrison identity.
\end{proof}

The above result can be used to obtain an effective expression for computing $c(e)$. To this end, rewrite $W$ as
\[
W=D^\frac12S^{-1}D^\frac12,\quad S=D-A+\frac1{\|d\|_1}dd^T
\]
so that 
\[
\widehat W=D^\frac12\widehat S^{-1}D^\frac12,\quad 
\widehat S=D-\widehat A+\frac1{\|d\|_1}dd^T=S-a_{i,j}vv^T.
\]
Moreover, from the Sherman-Woodbury-Morrison formula we have
\[
\widehat S^{-1}=S^{-1}+\frac{a_{i,j}}{1-a_{i,j}v^TS^{-1}v}S^{-1}vv^TS^{-1}.
\]
Whence in view of Theorem \ref{th:9} we obtain
\[
c(e)=a_{i,j}v^TS^{-1}D\widehat S^{-1}v=a_{i,j}v^TS^{-1}DS^{-1}v+ \frac{a_{i,j}^2v^TS^{-1}v}{1-a_{i,j}v^TS^{-1}v} v^TS^{-1}DS^{-1}v.
\]

From the above result we obtain the following representation of $c(e)$
\begin{equation}\label{eq:rep}
\begin{aligned}
&c(e)=\frac\beta{1-\alpha},\quad\alpha=a_{i,j}v^Tx=a_{i,j}(x_i-x_j),\quad\beta= a_{i,j}x^TDx,\\
& x=S^{-1}v,\quad S=D-A+\frac1{\|d\|_1}dd^T.
\end{aligned}
\end{equation}

Observe that  $\alpha$ and $\beta$ in \eqref{eq:rep} can be rewritten as
\begin{equation}\label{eq:rep1}
\begin{aligned}
&\alpha=a_{i,j}(e_i-e_j)^TF(e_i-e_j),\quad \beta=a_{i,j}(e_i-e_j)^TQ(e_i-e_j),\\
&F=S^{-1},\quad Q=FDF.
\end{aligned}
\end{equation}
Another observation is that the matrix $S$ is positive definite since it is invertible and is the sum of two semidefinite matrices. Therefore, it admits the Cholesky factorization $S=LL^T$. 

The major computational effort in computing $c(e)$ by means of \eqref{eq:rep} consists in solving the system $Sx=v$. If one has to compute the centrality score of a single edge $(i,j)$, then two  strategies can be designed for this task. A first possibility consists in computing the Cholesky factorization of $S$ and solving the two triangular systems. This approach costs $O(n^3)$ arithmetic operations, as the dominating cost is the one of the Cholesky factorization. 
A second possibility consists in applying an iterative method for solving the linear system with matrix $S$, that exploits the low cost of the matrix-vector product, say, Richardson iteration or preconditioned conjugate gradient method. This approach costs $O(m+n)$ operations per iteration, where $m$ is the number of nonzero entries of the adjacency matrix. Thus, it is cheaper than the former approach as long as the number of required iterations is less than $n^3/m$.

A different conclusion holds in the case where the centrality scores
$c_{i,j}$ of all edges $e=(i,j)$ must be computed. In fact, in this case, the cost is $O(n^3+m)$, by relying on the following computation that is based on \eqref{eq:rep1}:
\begin{enumerate}
\item Compute  $F = S^{-1}$ and $Q=FDF$;
\item For all $i<j$ such that $a_{i,j}\ne 0$ compute:
  \begin{enumerate}
\item \quad $\alpha = a_{i,j}(r_{i,i}+r_{j,j}-2r_{i,j})$, 
\item \quad $\beta = a_{i,j}(q_{i,i}+q_{j,j}-2q_{i,j})$,
\item \quad $c_{i,j}=\beta/(1-\alpha)$.
\end{enumerate}
\end{enumerate}

The overall cost of the above approach is dominated by the cost of step 1, i.e., $O(n^3)$ arithmetic operations. The drawback of this approach is that all the $n^3$ entries of the matrices $F$ and $Q$ must be stored. This can be an issue if $n$ takes very large values.

Another issue is the potentially large condition number of the matrix $S$. A way to overcome this difficulty consists in applying a sort of regularization in the inversion of the matrix $S$. This is the subject of the next section.

\section{Regularized Kemeny-based centrality score}\label{sec:reg}
Let $r>0$ be a regularization parameter and, with the notation of the previous sections,  define the regularized Kemeny constant as 
\[
\begin{aligned}
&K_r(G)=\hbox{trace}(((1+r)I-D^{-1}A+\ones h^T)^{-1})-(1+r)^{-1}\\
&=\hbox{trace}(((1+r)I-D^{-\frac12}AD^{-\frac12}+\frac 1{\|d\|_1} D^{\frac12}\ones\ones ^TD^{\frac12})^{-1})-(1+r)^{-1},
\end{aligned}
\]
where, for the second expression, we used the symmetrized version.
 Observe that, with respect to the standard definition, we have increased the diagonal entries of the matrix $W^{-1}=I-D^{-1}A+\ones h^T$ by the quantity $r$. From one hand, this modification reduces the condition number of $W$, on the other hand, it allows to deal with the situations where $W$ is singular, for instance, in the case where the graph is not connected.
 
If the graph is connected, then $K_r(G)=\sum_{\ell=2}^n(1+r-\lambda_\ell)^{-1}$, where $1=\lambda_1>\lambda_2\ge\cdots\ge \lambda_n$ are the eigenvalues of $D^{-1}A$ ordered in a non-increasing order, moreover $K_0(G)=K(G)$. 
 On the other hand, if $G$ is not connected and is formed by two connected components, then 
 $K_r(G)=r^{-1}+\sum_{\ell=3}^n(1+r-\lambda_\ell)^{-1}$ since in this case $\lambda_1=\lambda_2=1$.

Similarly, we may define the {\em regularized Kemeny-based centrality score} of the edge $e=(i,j)$
\begin{equation}\label{eq:cre0}
c_r(e):=K_r(G\setminus\{e\})-K_r(G),
\end{equation}
so that we have
\[
c_r(e)=\sum_{\ell=2}^n((1+r-\hat\lambda_\ell)^{-1}-(1+r-\lambda_\ell)^{-1})
=\sum_{\ell=2}^n(1+r-\hat\lambda_\ell)^{-1}(\hat\lambda_\ell-\lambda_\ell)(1+r-\lambda_\ell)^{-1},
\]
where $\hat\lambda_\ell$, $\ell=1,\ldots,n$, are the eigenvalues of the matrix $D^{-1}\widehat A$, ordered in non-increasing order, for $\widehat A=A+vv^T$. Since $\hat\lambda_\ell\ge\lambda_\ell$
(compare the proof of Theorem \ref{th:mon}),
 the above equation implies that $c_r(e)\ge 0$. Observe that if $\widehat G$ is disconnected, then it is formed by two connected components and the matrix $\widehat A$ is reducible and has two eigenvalues equal to 1 so that $1=\hat\lambda_1 = \hat\lambda_2 > \hat\lambda_3$.
Thus, we may write
\begin{equation}\label{eq:cre}
c_r(e)=r^{-1}-(1+r-\lambda_2)^{-1}+\sum_{\ell=3}^n((1+r-\hat\lambda_\ell)^{-1}-(1+r-\lambda_\ell)^{-1}).
\end{equation}
In this case, we have $\lim_{r\to 0}r c_r(e) = 1$ and it turns out that the regularized centrality score of a cut-edge grows as $r^{-1}$ when $r\to 0$.
Observe also that the quantity $c_r(e)$ in \eqref{eq:cre} cannot exceed the value $r^{-1}$. In fact, we have the following result.
\begin{theorem}\label{thm:nn}
If $e$ is a cut-edge, then
for the regularized centrality score of \eqref{eq:cre0} we have
$c_r(e)\le r^{-1}$.
\end{theorem}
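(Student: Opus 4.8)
The plan is to start from the explicit eigenvalue expression \eqref{eq:cre}, which already isolates the singular term $r^{-1}$ produced by the double eigenvalue $\hat\lambda_1=\hat\lambda_2=1$ of the reducible matrix $D^{-1}\widehat A$. Since the $\ell=2$ contribution to $K_r(\widehat G)$ is exactly $(1+r-\hat\lambda_2)^{-1}=r^{-1}$, the claim $c_r(e)\le r^{-1}$ is equivalent to
\[
\sum_{\ell=3}^n (1+r-\hat\lambda_\ell)^{-1} \le \sum_{\ell=2}^n (1+r-\lambda_\ell)^{-1}.
\]
Thus the whole statement reduces to comparing two sums of reciprocals, the left one running over $n-2$ indices and the right one over $n-1$ indices, the mismatch in length being precisely what will leave room for the bound.

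The key tool I would use is the interlacing between the spectra of $P$ and $\widehat P$. As observed in the proof of Theorem~\ref{th:mon}, the symmetrized matrix $\widehat G$ differs from $G$ by a rank-one positive semidefinite term proportional to $D^{-\frac12}vv^TD^{-\frac12}$, with $v=e_i-e_j$. For such a rank-one positive update the Cauchy interlacing theorem gives, with eigenvalues ordered non-increasingly,
\[
\hat\lambda_1 \ge \lambda_1 \ge \hat\lambda_2 \ge \lambda_2 \ge \cdots \ge \hat\lambda_n \ge \lambda_n,
\]
so in particular $\hat\lambda_\ell \le \lambda_{\ell-1}$ for every $\ell\ge 2$. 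Since $\lambda_{\ell-1}<1\le 1+r$ whenever $\ell-1\ge 2$, all denominators are positive, and $\hat\lambda_\ell \le \lambda_{\ell-1}$ yields the termwise inequality $(1+r-\hat\lambda_\ell)^{-1} \le (1+r-\lambda_{\ell-1})^{-1}$.

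I would then conclude by summing this termwise bound over $\ell=3,\ldots,n$ and re-indexing:
\[
\sum_{\ell=3}^n (1+r-\hat\lambda_\ell)^{-1} \le \sum_{\ell=3}^n (1+r-\lambda_{\ell-1})^{-1} = \sum_{k=2}^{n-1} (1+r-\lambda_k)^{-1} \le \sum_{k=2}^n (1+r-\lambda_k)^{-1},
\]
the last step being justified because the dropped term $(1+r-\lambda_n)^{-1}$ is positive. This is exactly the reduced inequality above, whence $c_r(e)\le r^{-1}$.

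The main obstacle I anticipate is justifying the upper interlacing inequality $\hat\lambda_\ell \le \lambda_{\ell-1}$: Lemma~\ref{lem:eigval} as stated delivers only the one-sided bounds $\lambda_\ell\le\hat\lambda_\ell\le\lambda_\ell+a_{i,j}(d_i^{-1}+d_j^{-1})$, which is not sharp enough to produce the index shift that makes the sums comparable. One therefore has to invoke the stronger rank-one interlacing (equivalently, Weyl's inequality $\hat\lambda_{i+1}\le\lambda_i+\mu_2$ with $\mu_2=0$ the second eigenvalue of the rank-one correction) rather than the minimax bound already quoted in the paper. Everything else is routine monotonicity of $t\mapsto (1+r-t)^{-1}$ together with the re-indexing of the sum.
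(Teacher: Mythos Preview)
Your proposal is correct and essentially matches the paper's proof: both start from \eqref{eq:cre}, invoke the Cauchy interlacing inequality $\lambda_\ell\ge\hat\lambda_{\ell+1}$ for the rank-one positive semidefinite perturbation (the paper cites \cite[Exercise~III.2.4]{bhatia:book} rather than Lemma~\ref{lem:eigval}, just as you anticipated), and conclude by the same index shift and by discarding the positive term $(1+r-\lambda_n)^{-1}$. The only cosmetic difference is that the paper groups the sum as $\sum_{\ell=2}^{n-1}\bigl[(1+r-\hat\lambda_{\ell+1})^{-1}-(1+r-\lambda_\ell)^{-1}\bigr]-1/(1+r-\lambda_n)\le 0$, whereas you compare the two sums separately after re-indexing.
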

\begin{proof}
Since $e$ is a cut-edge, we may apply \eqref{eq:cre}, that yields
 $c(e)-r^{-1}=\sum_{\ell=3}^n[(1+r-\hat\lambda_\ell)^{-1}-(1+r-\lambda_\ell)^{-1}]-(1+r-\lambda_2)^{-1}$, where $\lambda_\ell$ and $\hat\lambda_\ell$, $\ell=1,\ldots,n$, are the eigenvalues of $D^{-\frac12}AD^{-\frac12}$ and of $D^{-\frac12}\widehat AD^{-\frac12}$, respectively, ordered in non-increasing order. Thus we get
 \begin{equation}\label{eq:tmpcre}
     c_r(e)-r^{-1}=\sum_{\ell=2}^{n-1}[(1+r-\hat\lambda_{\ell+1})^{-1}-(1+r-\lambda_\ell)^{-1}]-1/(1+r-\lambda_n).
 \end{equation}
  Since $D^{-\frac12}(\widehat A-A)D^{-\frac12}$ is a positive semidefinite rank-one matrix, from the Cauchy interlacing property~\cite[Exercise~III.2.4]{bhatia:book} we have $\hat\lambda_\ell\ge\lambda_\ell \ge \hat\lambda_{\ell+1}$ so that
$(1+r-\hat\lambda_{\ell+1})^{-1}-(1+r-\lambda_\ell)^{-1}\le 0$ which completes the proof in view of \eqref{eq:tmpcre}.
\end{proof}

Due to the additive term $r^{-1}$ in \eqref{eq:cre}, the centrality scores of the cut-edges dominate the scores of the other edges. Moreover, cut-edges that connect two large components of a graph have roughly the same score of cut-edges that connect a single node to the remaining part of the graph.
\begin{figure}
\centering
\begin{tabular}{ll}
\includegraphics[scale=0.47]{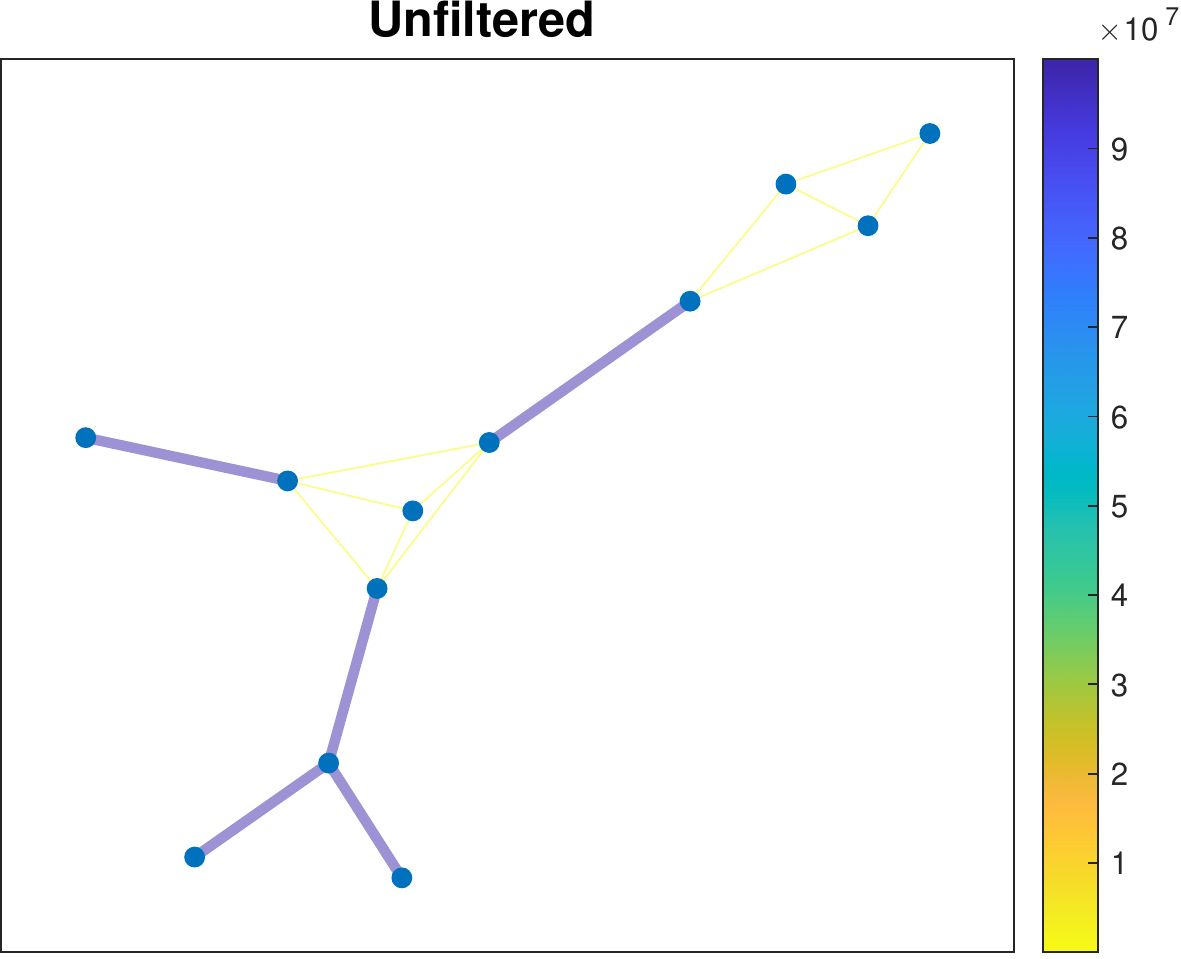} &
\includegraphics[scale=0.47]{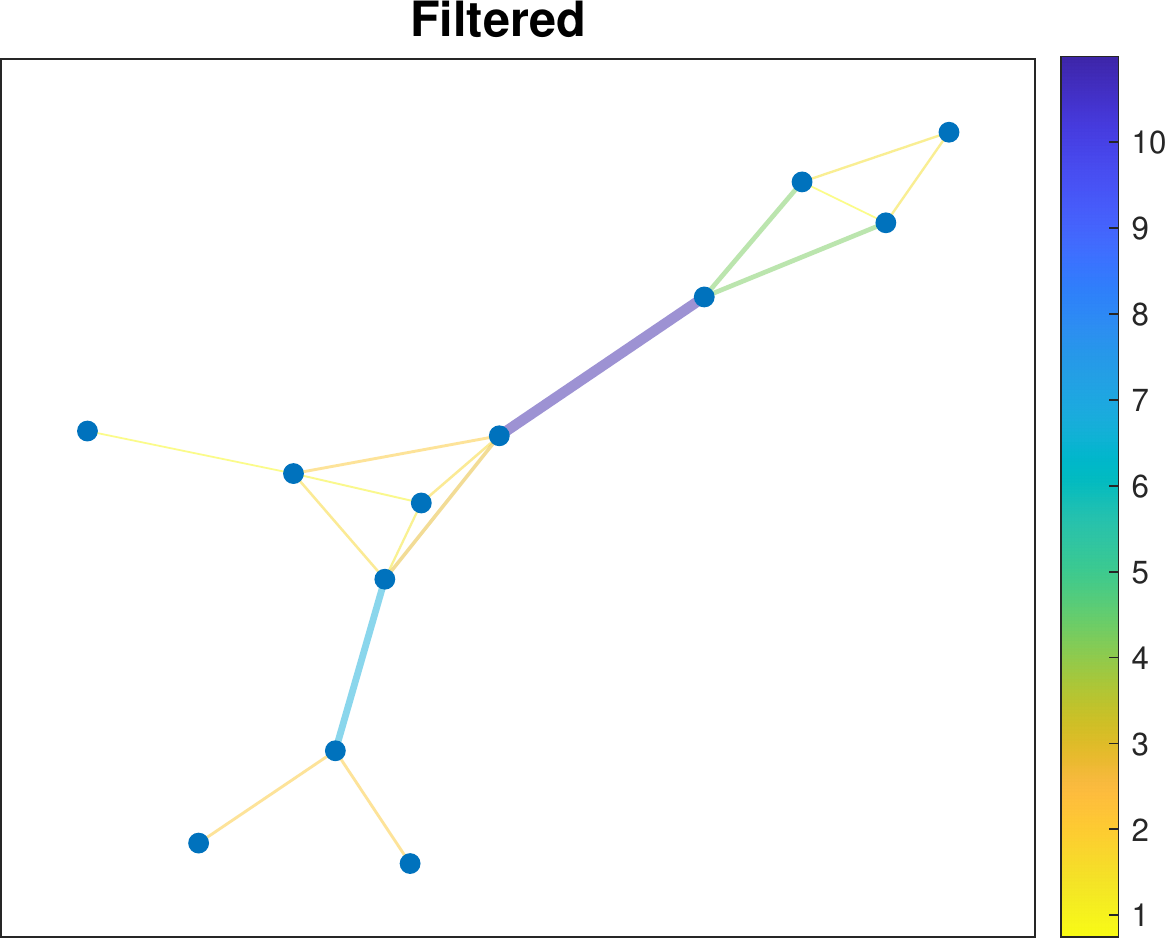}\\
\includegraphics[scale=0.47]{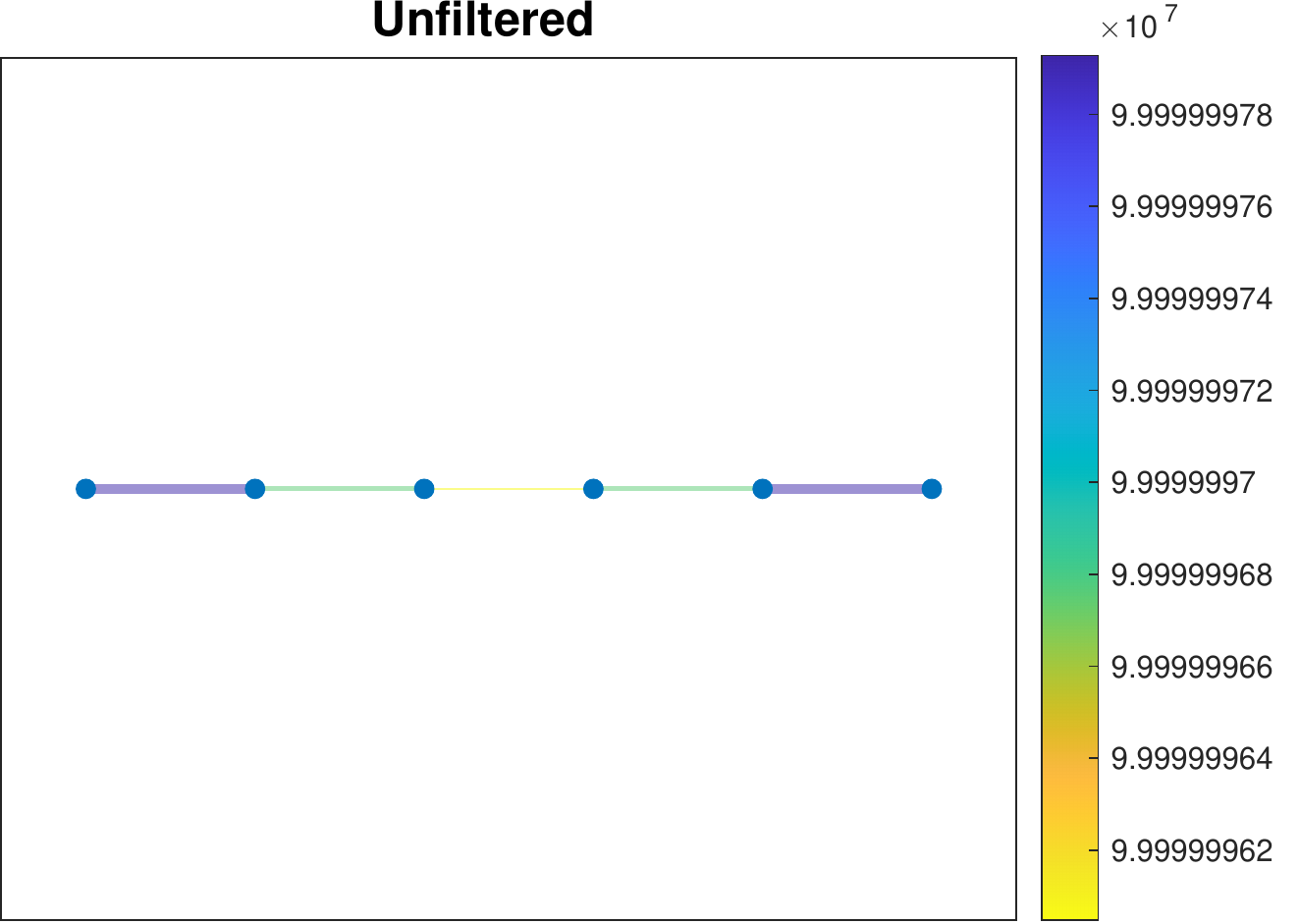} &
\includegraphics[scale=0.47]{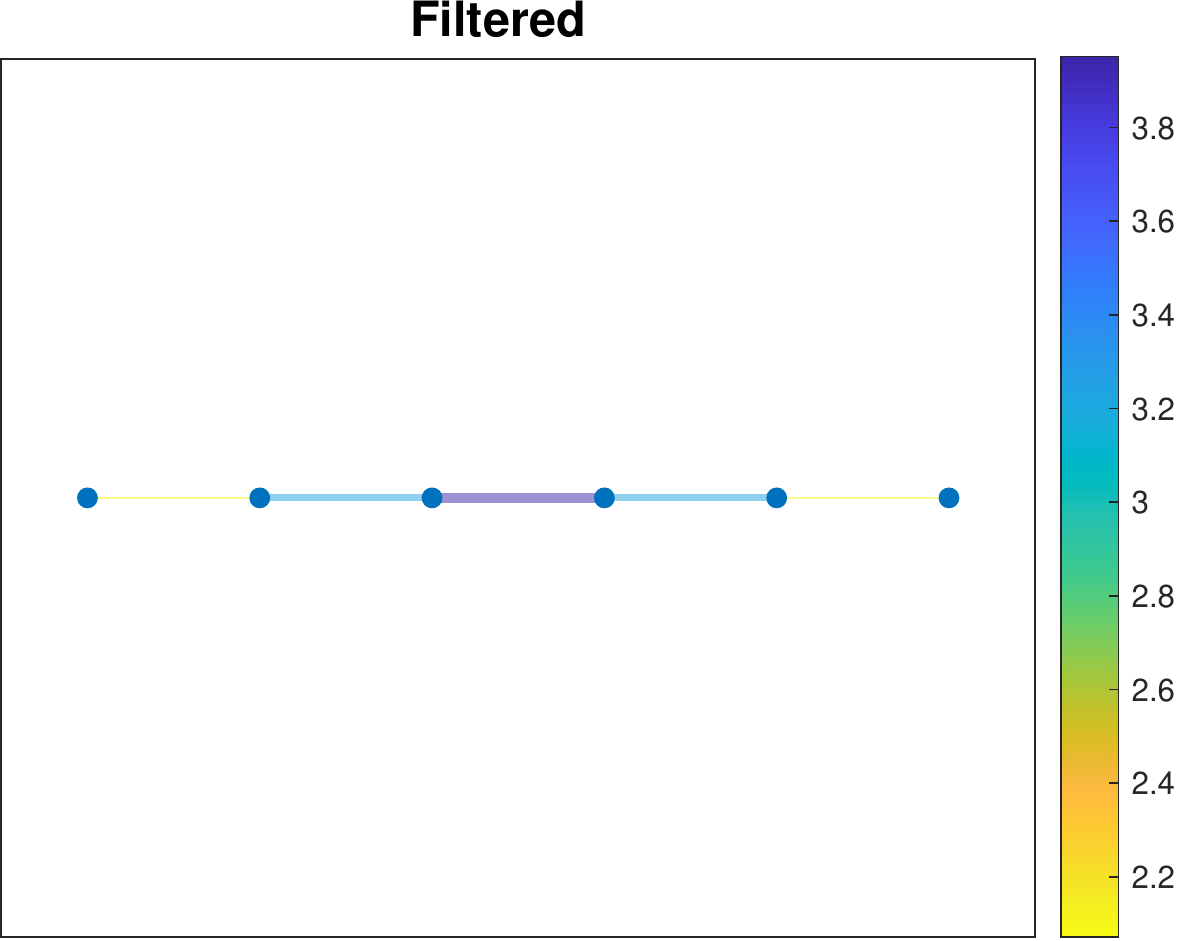}
\end{tabular}

\caption{Centrality scores of two graphs having some cut-edges: On the left the score computed by means of simple regularization with $r=10^{-8}$; on the right the score is computed by filtered regularization. Blue and thick edges denote a higher centrality.}\label{fig:graph1}
\end{figure}


A way to overcome this drawback, where any cut-edge receives a huge regularized score independently of the mass of the graphs that it connects, can be obtained by modifying definition \eqref{eq:cre0} as follows:
\begin{equation}\label{eq:ctilde}
\tilde{c}_r(e):=\left\{\begin{array}{ll}
r^{-1}-c_r(e)& \hbox{if $e$ is a cut-edge,}\\[1ex]
c_r(e)&\hbox{if $e$ is not a cut-edge,}
\end{array}\right.
\end{equation}
so that we still obtain nonnegative values in view of Theorem \ref{thm:nn}. We call $\tilde{c}_r(e)$ \emph{filtered Kemeny-based centrality}. 
From \eqref{eq:tmpcre} and \eqref{eq:ctilde}, we deduce that $\lim_{r \to 0} \tilde{c}_r(e)$ is finite; moreover,  if $e$ is not a cut-edge, then $\lim_{r \to 0} \tilde{c}_r(e)=c(e)$.

In order to figure out if $e$ is a cut-edge, one may apply the available 
computational techniques of \cite{Tarjan}, or, more simply, by selecting those edges whose regularized score is of the order of $r^{-1}$. This can be achieved by means of a heuristic strategy by computing the unfiltered values $c_r(e)$ and selecting those edges $e$ for which $c_{r}(e)>\frac12 r^{-1}$.

In Figure~\ref{fig:graph1} we report the centrality scores $c_r$ and $\tilde{c}_r$ obtained by regularization and by filtered regularization, respectively, where thick blue edges denote high centrality score.
This example shows the case  where the removal of some edge splits the graph into two components.  If $c_{r}(e)>\frac12 r^{-1}$, then $e$ is considered a cut-edge. On the left, the centrality scores with regularization parameter $r=10^{-8}$ are computed. In this case, the scores of cut-edges are of  the order of $r^{-1}$.
On the right, the filtering procedure has been applied. We may see that, in this case, only the edges that connect non-negligible subgraphs have a higher score, but not of the order of $r^{-1}$, while the remaining disconnecting edges have an intermediate moderate score.

We see that, after the filtering procedure, the values $\tilde{c}_r(e)$ have comparable magnitudes across both cut-edges and non-cut-edges, and that their ordering matches remarkably well the intuitive notion of importance of an edge for the overall connectivity of the graph.

\section{Computational issues}\label{sec:com}
In order to compute the regularized centrality $c_r(e)$ of an edge $e$, we may repeat the arguments of Section \ref{sec:different} used to provide simple formulas for computing $c(e)$. In particular, the expression of $c(e)$ given in Theorem \ref{th:9}  still holds with $W^{-1}$ and $\widehat W^{-1}$ replaced by $(rI+W)^{-1}$ and $(rI+\widehat W)^{-1}$, respectively.  This way, equations \eqref{eq:rep} are still valid with $S$ replaced by the positive definite matrix $S+r D$.
That is, instead of inverting $S$ directly, we may compute $F_r=(S+r D)^{-1}$ for a small positive value of the regularization parameter $r$. This regularization approach allows to treat also the cases where the network is disconnected, so that the matrix $S$ is singular, and the case where the removal of an edge disconnects the network. In that case, the corresponding $\alpha$ in equation \eqref{eq:rep} coincides with 1. 

The computation of the centrality score of all the edges,  with the regularization technique, is reported in Algorithm \ref{lst:kemenall}.
\begin{algorithm}
\caption{Regularized Kemeny-based centrality of all the edges, where the number $n$ of nodes is small enough so that $n^2$ entries  can be stored in the RAM.\label{lst:kemenall}}

  \hspace*{\algorithmicindent} \textbf{Input:}  The adjacency matrix $A$ and a regularizing parameter $r>0$ \\
    \hspace*{\algorithmicindent} \textbf{Output:} The value $c_r(e)$ for any edge $e=(i,j)$
 %
 \begin{algorithmic}[1]
  \STATE{Compute $d=A\ones$ and $\gamma=d^T\ones $;}
  \STATE{ Set $D=\diagm(d)$ and $S=(1+r)D-A+\frac{1}{\gamma}d d^T$;}
\STATE{Compute $F=S^{-1}$, $Q=F^T D F$;}
  
  \FORALL{\rm  edge $e=(i,j)$} 
  \STATE{compute
  $\alpha = a_{ij}(f_{ii}+f_{jj}-2f_{ij})$,
           $\beta = a_{ij}(q_{ii}+q_{jj}-2q_{ij})$, and
         $c_r(e) = \beta/(1-\alpha)$.}
  \ENDFOR
  
  \end{algorithmic}
\end{algorithm}

In this approach the amount of available RAM must be of the order of $n^2$ in order to store all the entries of $S^{-1}$. Indeed, large networks require a huge storage.    

A possible way to overcome the storage  issues encountered in the case of large networks consists in exploiting the sparsity of the matrix $A$. In fact, since $T=(1+r)D-A$ is positive definite, there exists its Cholesky factorization $T=LL^T$. Moreover, the sparsity of $T$ induces a sparsity structure in $L$ so that the matrix $L$ can be stored with a low memory space and the triangular systems having matrices $L$ and $L^T$ can be solved at a low cost. An example is given in Figure \ref{fig:pisa} where the structure of $T$ and of $L$ are displayed.

\begin{figure}
\begin{center}
\includegraphics[scale=0.4]{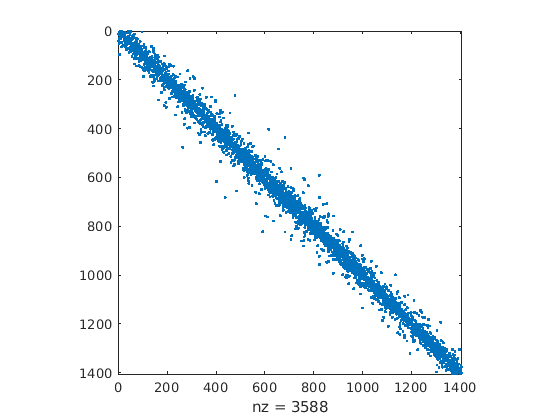}~~
\includegraphics[scale=0.4]{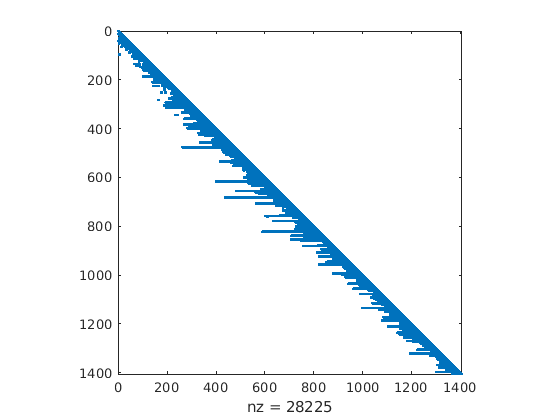}
\end{center}\caption{\footnotesize  Sparsity pattern of the adjacency matrix of the Pisa road network (left) and sparsity pattern of the Cholesky factor (right).}\label{fig:pisa}
\end{figure}

By applying the Sherman-Woodbury-Morrison identity to $S=T+\frac1{\|d\|_1}dd^T$  we may write
\[
S^{-1}=T^{-1}-\frac1{\|d\|_1+d^Tz}zz^T,\quad z=T^{-1}d
\]
so that for the vector $x$ in equation \eqref{eq:rep} we have
\[
x = w-\frac{z^Td}{\|d\|_1+d^Tz}z,\quad w=T^{-1}v.
\]
Moreover, from the Cholesky factorization $T=LL^T$ we get
\[
LL^Tz=d,\quad LL^Tw=v.
\]
The above expressions can be used together with the first equation in \eqref{eq:rep} in order to compute $c_r(e)$.

Observe that from the computational point of view, one has to compute the Cholesky factorization once for all, this is cheaper than inverting a matrix. Moreover, two sparse triangular systems with matrix $L$ and $L^T$ must be solved once for all for computing $z$. Finally, for any edge $(i,j)$, two sparse triangular systems must be solved for computing $w$ and $O(n)$ additional operations must be performed. Indeed, in this approach the cost is higher but this allows one to deal with large networks even if the amount of RAM storage is not sufficiently large. 
Algorithm \ref{lst:kemenchol} implements this approach, including regularization.

\begin{algorithm}
\caption{Regularized and filtered Kemeny-based centrality of all the edges, relying on the Cholesky factorization.\label{lst:kemenchol}}
  \hspace*{\algorithmicindent} \textbf{Input:}
  The adjacency matrix $A$ and a regularizing parameter $r>0$ \\
    \hspace*{\algorithmicindent} \textbf{Output:}
The value $\tilde c_r(e)$ for any edge $e=(i,j)$ 
  \begin{algorithmic}[1]
  \STATE{Compute $d=A\ones$;}
\STATE{Set $D=\diagm(d)$ and $T=(1+r)D-A$;}
\STATE{Compute the Cholesky factorization $T=L L^T$; }
\STATE{ Solve the linear systems $L y = d$ and $L^T z=y $; }
\STATE{ Compute $\gamma=d^T z+d^T\ones $; }
\FORALL{\rm edge $e=(i,j)$}
\STATE{ set $v=e_i-e_j$} 
\STATE{  solve the systems
  $L y=v$ and $L^T w=y$}
\STATE{  set $\delta=d^T w$ and $x=w-\frac{\delta}{\gamma}z$ }
\STATE{  compute $\alpha = a_{ij}(x_i-x_j)$, $\beta = a_{ij}\sum_{\ell=1}^n x_\ell^2 d_\ell$, and $c_r(e) = \beta/(1-\alpha)$; }
\IF{$c_r(e)>\frac12 r^{-1}$}
\STATE{$\tilde c_r(e)=r^{-1}- c_r(e)$}
\ELSE
\STATE{$\tilde c_r(e)=c_r(e)$}
\ENDIF
\ENDFOR
  
  \end{algorithmic}
\end{algorithm}

Note that after the precomputation steps the computation of the centrality of each edge is independent of the others, hence the main loop can be performed in parallel.

As an example of application, we consider the cases of two barbell-shaped graphs, together with their disjoint union. The Kemeny-based centrality obtained by our approach 
by considering the disjoint union of the graphs where the adjacency matrix is reducible, are reported in Figure \ref{fig:union}.

We can see from this representation that the centrality scores of the disjoint union does not differ much from the union of the centralities of the two graphs. In fact, observe  that in the rightmost graph, where the barbell is formed by two loops, the edges in the loops have a high value of centrality score. In fact, removing one of these edges almost disconnects the loop. Whereas, in the leftmost graph, where the loops are replaced by highly connected set of nodes, these edges have a low score. Indeed, their removal does not alter much the overall connectivity of the graph. On the other hand, removing one of the two edges connecting the two groups of nodes strongly reduces the connectivity between the two groups.

\begin{figure}
\begin{center}
\includegraphics[scale=0.5]{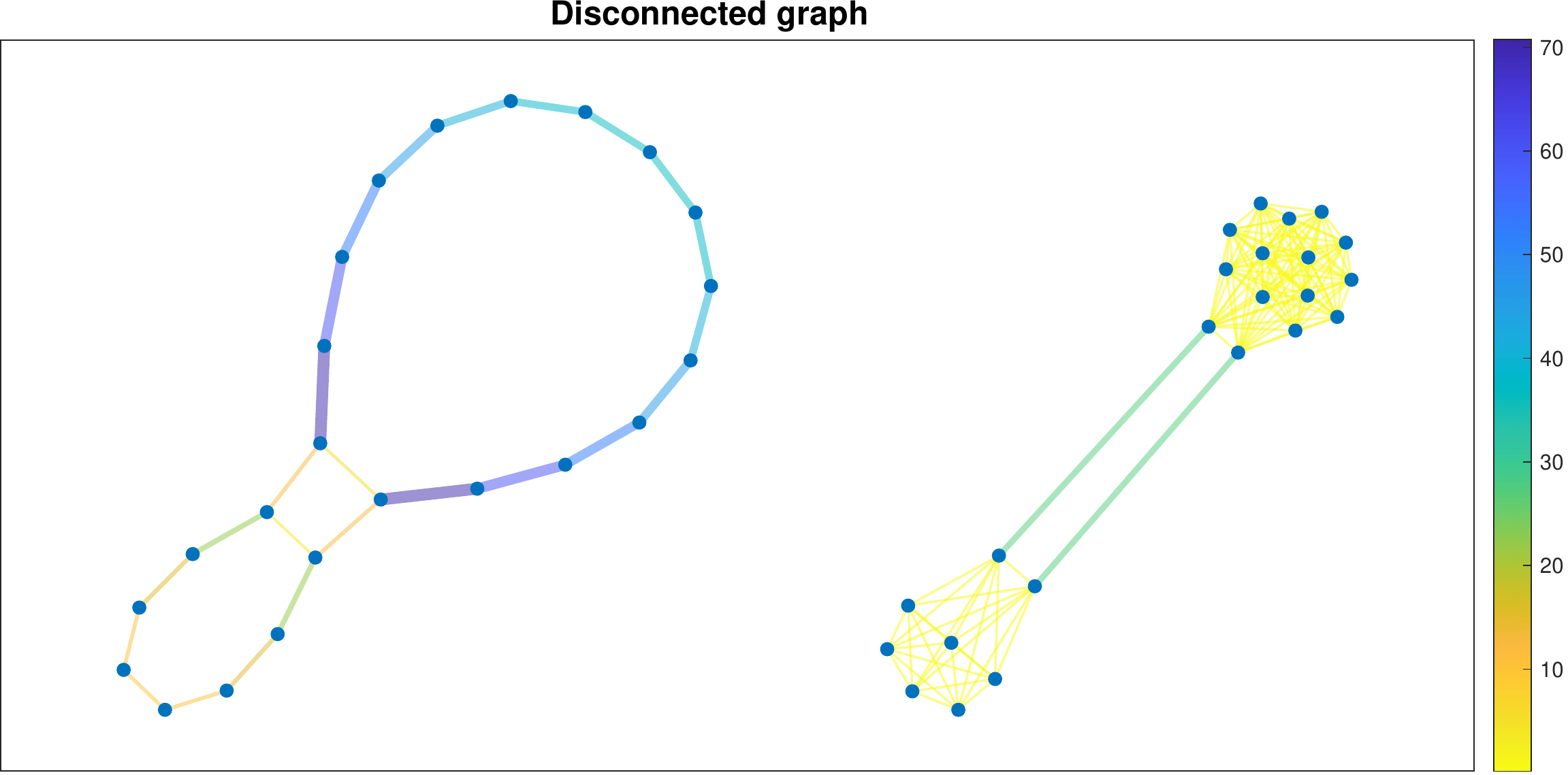}
\end{center}
\caption{\footnotesize  In this figure, the Kemeny-based centrality algorithm with regularization is applied to the reducible adjacency matrix. Blue and thick edges denote higher Kemeny-based centrality. The lack of irreducibility is overcome by the regularization technique. The values of the centralities obtained this way agree with those computed by separately applying the algorithm to the two graphs. }\label{fig:union}
\end{figure}

In the next example (Figures~\ref{fig:pisa1} and~\ref{fig:pisa2}) we consider a network composed of the roads in the city centre of Pisa. To each edge $(i,j)$ we have assigned a connection strength $w_{ij} = \exp(-\ell(i,j) / \ell_{\max}) \in (0,1]$, where $\ell(i,j)$ is the length of the edge, i.e., the Euclidean distance between points $i$ and $j$, and $\ell_{\max}$ is the maximum edge length in the network. This network is a planar undirected graph with 1794 nodes and 3240 edges; it includes many dead ends that are cut-edges, and various roads that, while not being cut-edges, are important bottlenecks for connectivity; among them are bridges on the Arno river and overpasses over the railroad line. We can see that the filtered version of the Kemeny-based centrality (here computed with regularization parameter $r=10^{-8}$) does an excellent job at highlighting these bottlenecks.

In the unfiltered version of the measure, instead, cut-edges take a very high value and are essentially the only ones to be displayed in blue. This confirms that the filtering procedure is necessary to obtain sensible results.

In the other subfigures, we display various other centrality measures that have been computed either with Matlab's \texttt{centrality} command or with Python's Networkx library~\cite{networkx}. We refer to~\cite{estrada:book} for their definitions. 
\begin{itemize}
    \item The road-taking probability in the Pagerank model (with $\alpha=0.85$) is defined for an edge $(i,j)$ as $rt((i,j)) = \pi_i R_{ij} + \pi_j R_{ji}$, where $\pi$ is the Pagerank vector and $R=\alpha P + (1-\alpha)\frac{1}{n}\mathbf{1}\mathbf{1}^T$ is the stochastic transition matrix of the Pagerank model; this quantity corresponds to the long-term probability that a random surfer goes through that edge (in any direction).
    \item Pagerank and Betweenness on the dual graph are defined using the so-called \emph{dual graph}, or \emph{line graph}, of the network; i.e., a graph in which each road is a node, and two nodes are joined by an edge if the corresponding roads meet. This allows us to compute edge centrality measures with these two algorithms, which were designed to compute node importance.    
    \item Edge betweenness (with $\ell(i,j)$ as the distance) and edge current-flow betweenness~\cite{bf:05} (with resistances $w_{ij}$) are computed using Python's Networkx library, while all previous measures were computed with Matlab's \texttt{graph/centrality} command. These are the only other two measures (among those considered) that partially do a similar job of highlighting bottleneck edges.
\end{itemize}
\begin{figure}
\centering

\begin{tabular}{ll}
\includegraphics[width=0.42\textwidth]{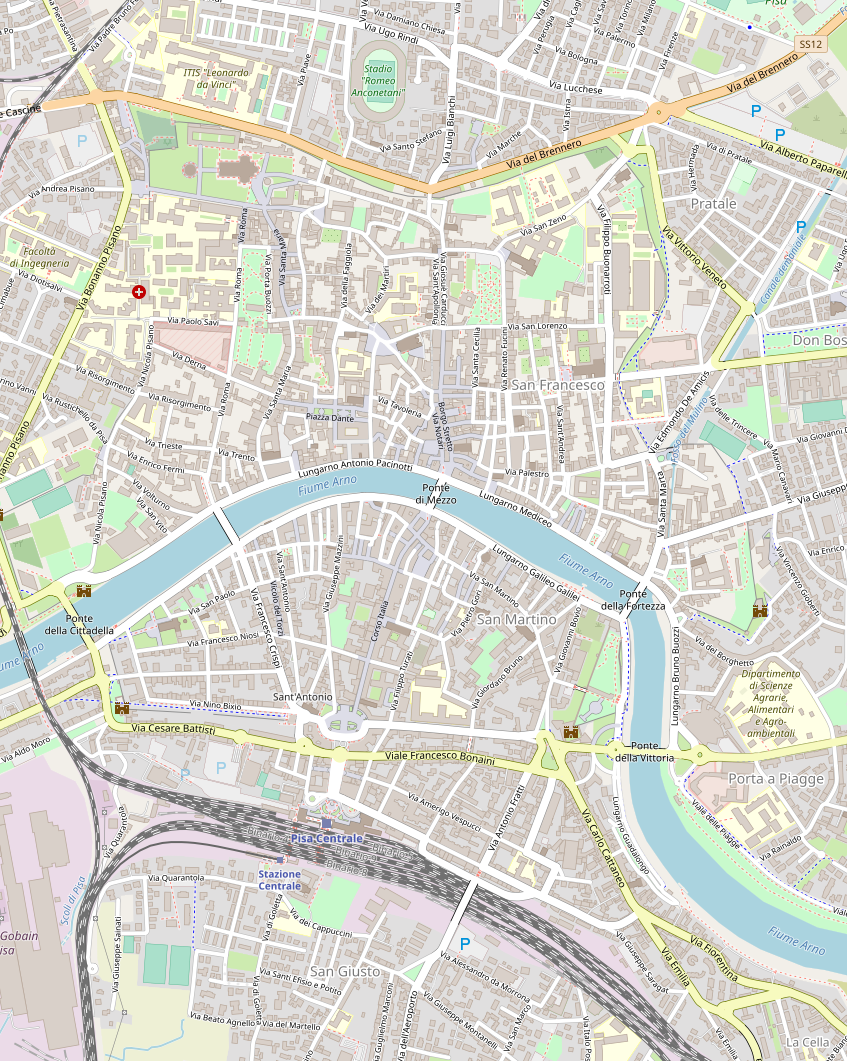} &
\includegraphics[width=0.45\textwidth]{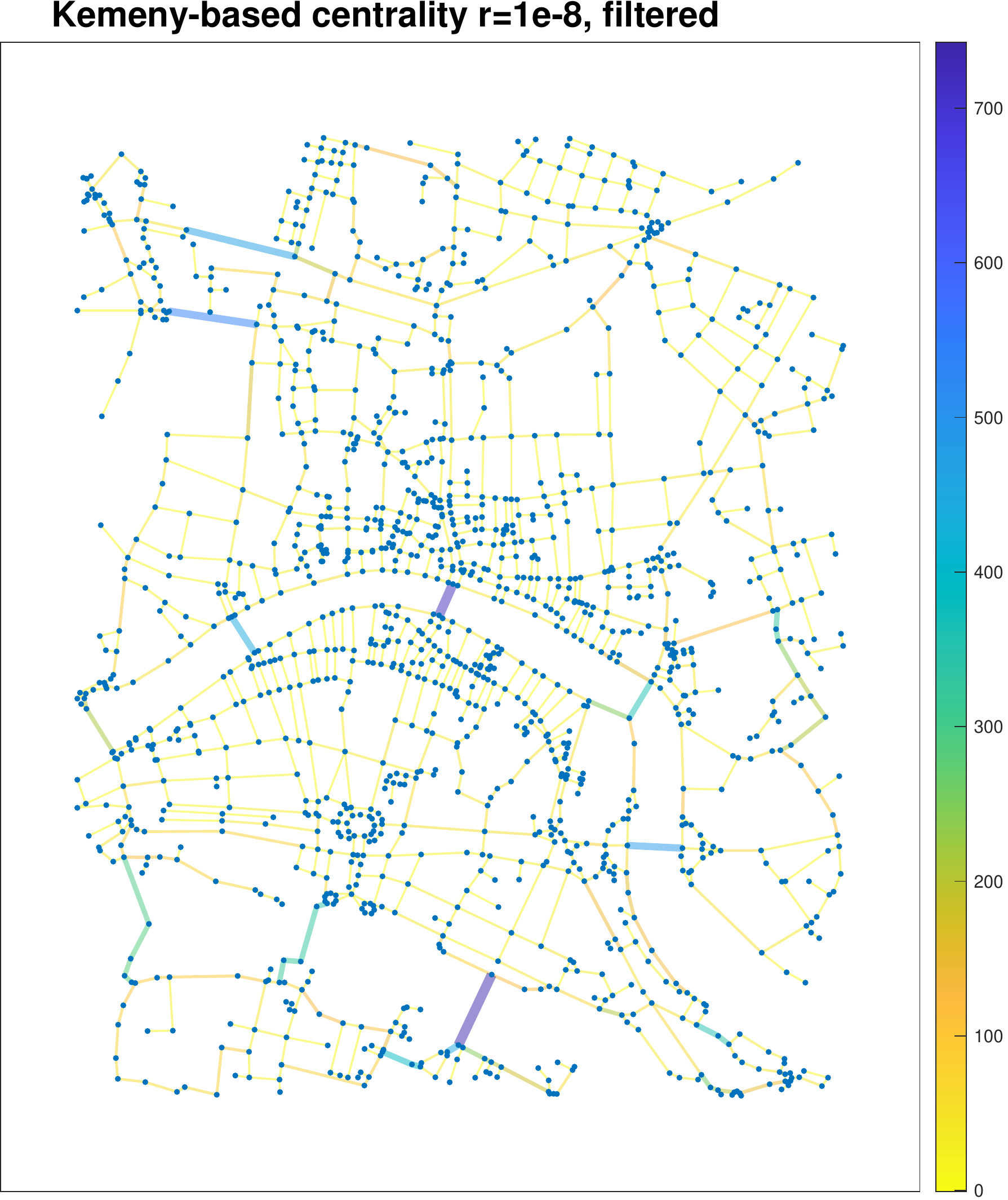}\\
\includegraphics[width=0.45\textwidth]{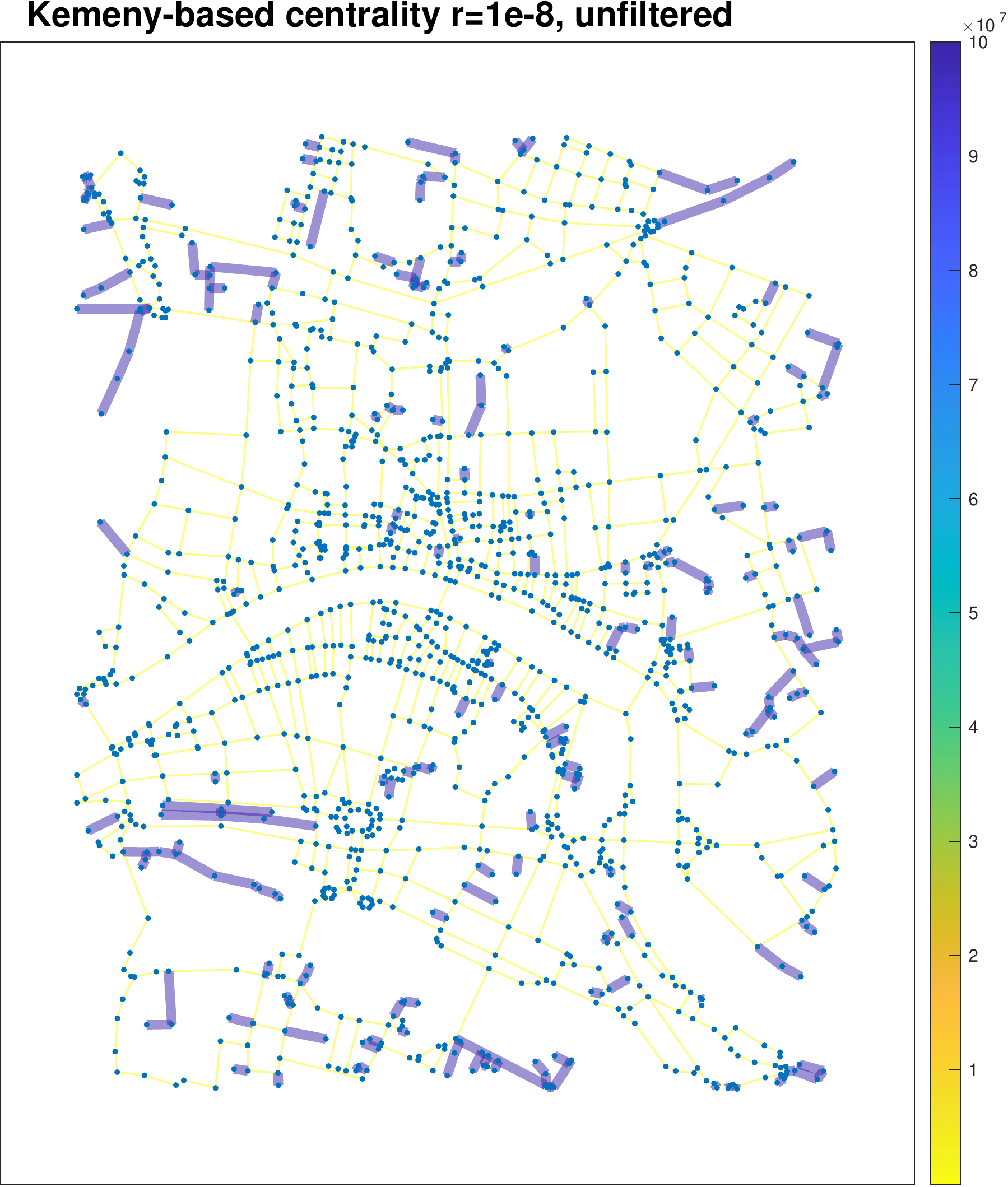} &
\includegraphics[width=0.45\textwidth]{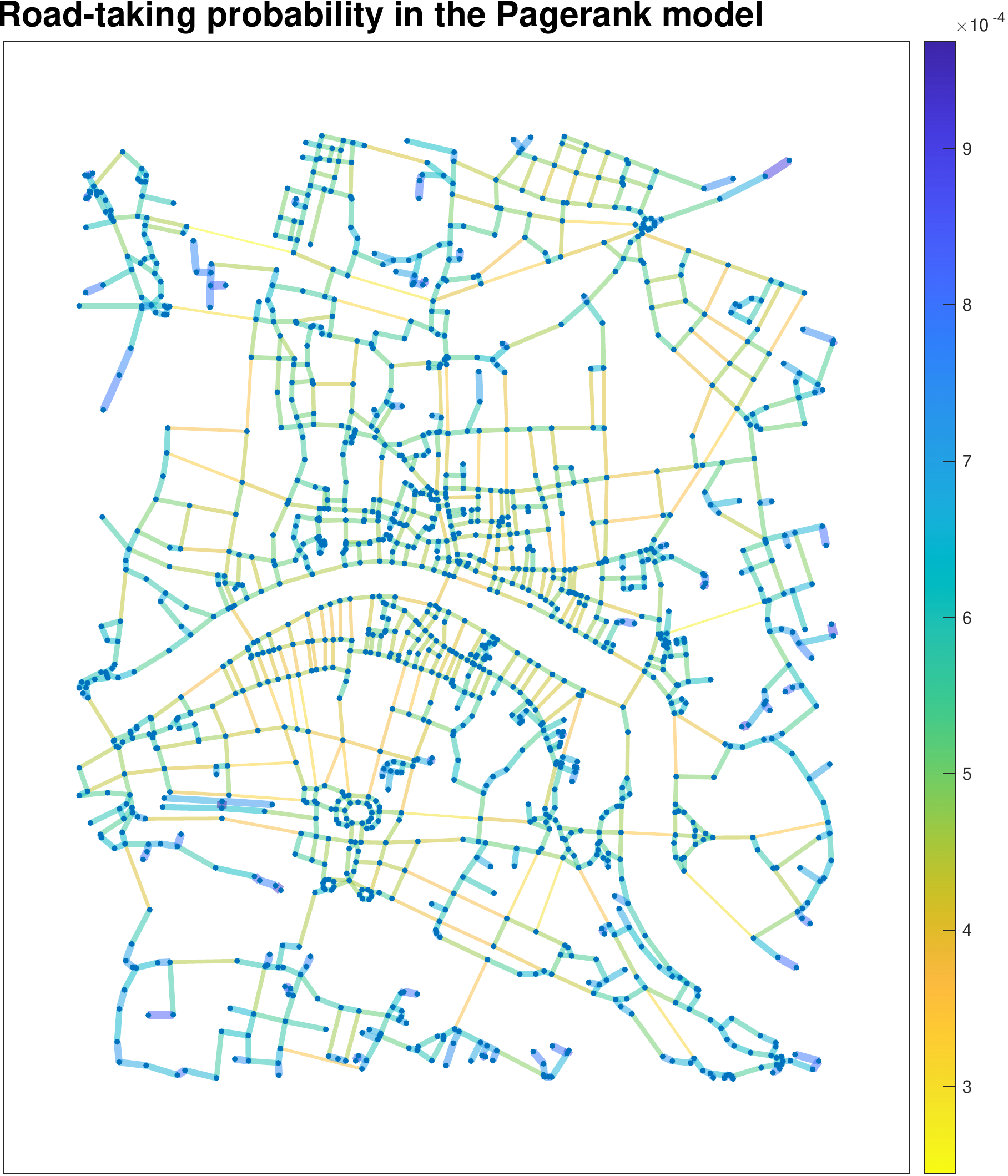}
\end{tabular}

\caption{Comparison of several centrality measures on a map of the Pisa city center; part I.} \label{fig:pisa1}
\end{figure}
\begin{figure}

\begin{tabular}{ll}
\includegraphics[width=0.45\textwidth]{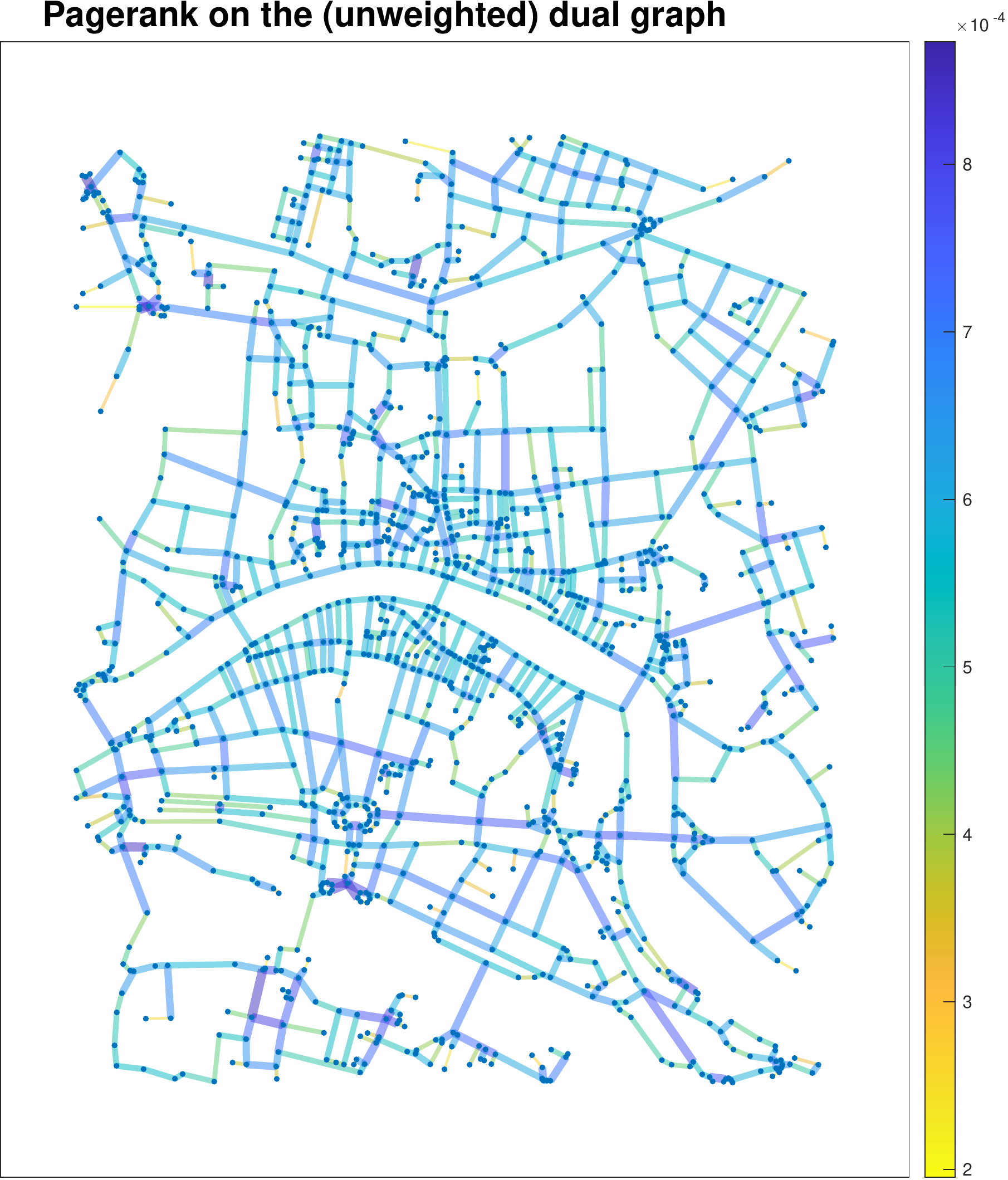} &
\includegraphics[width=0.45\textwidth]{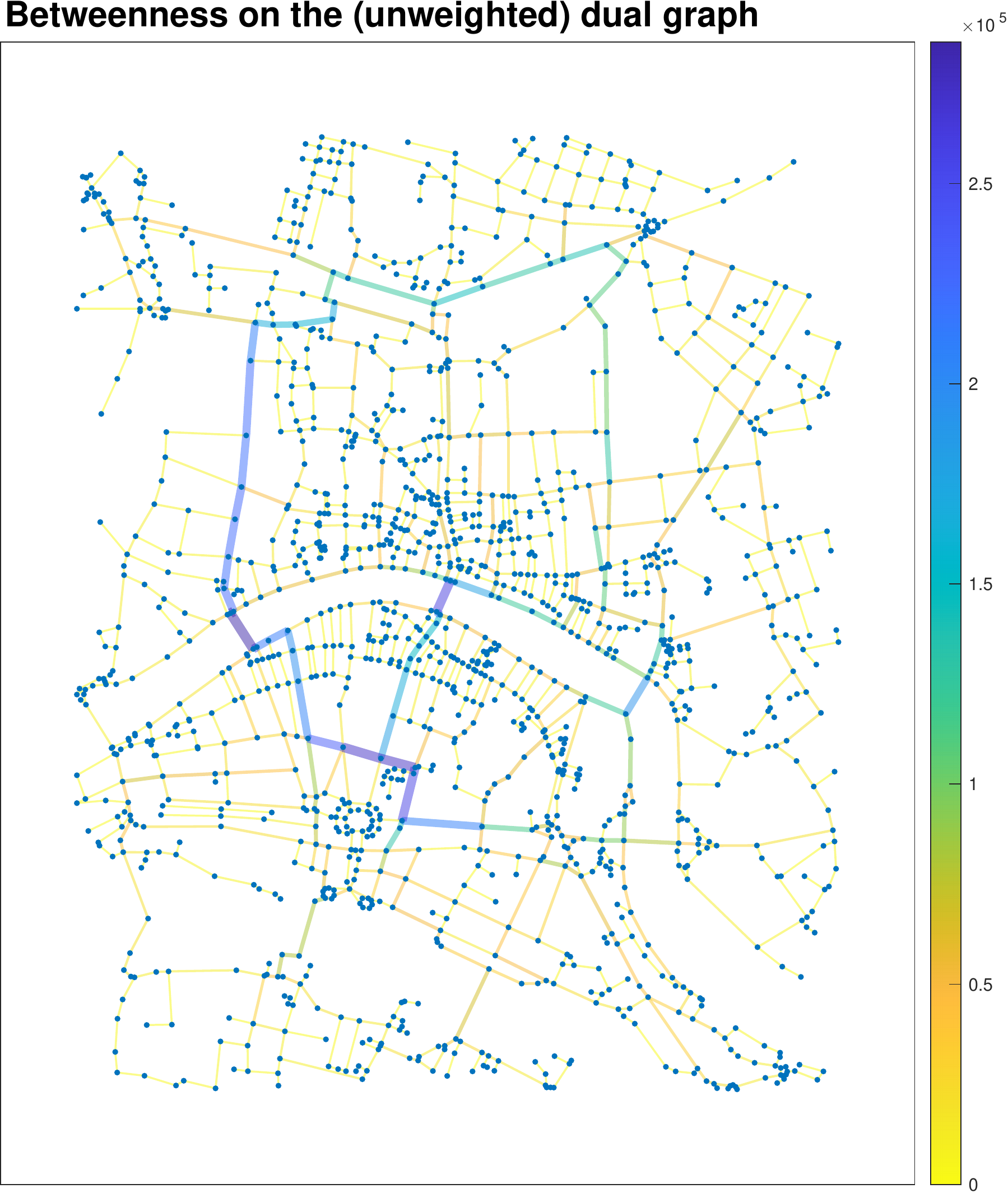}\\
\includegraphics[width=0.45\textwidth]{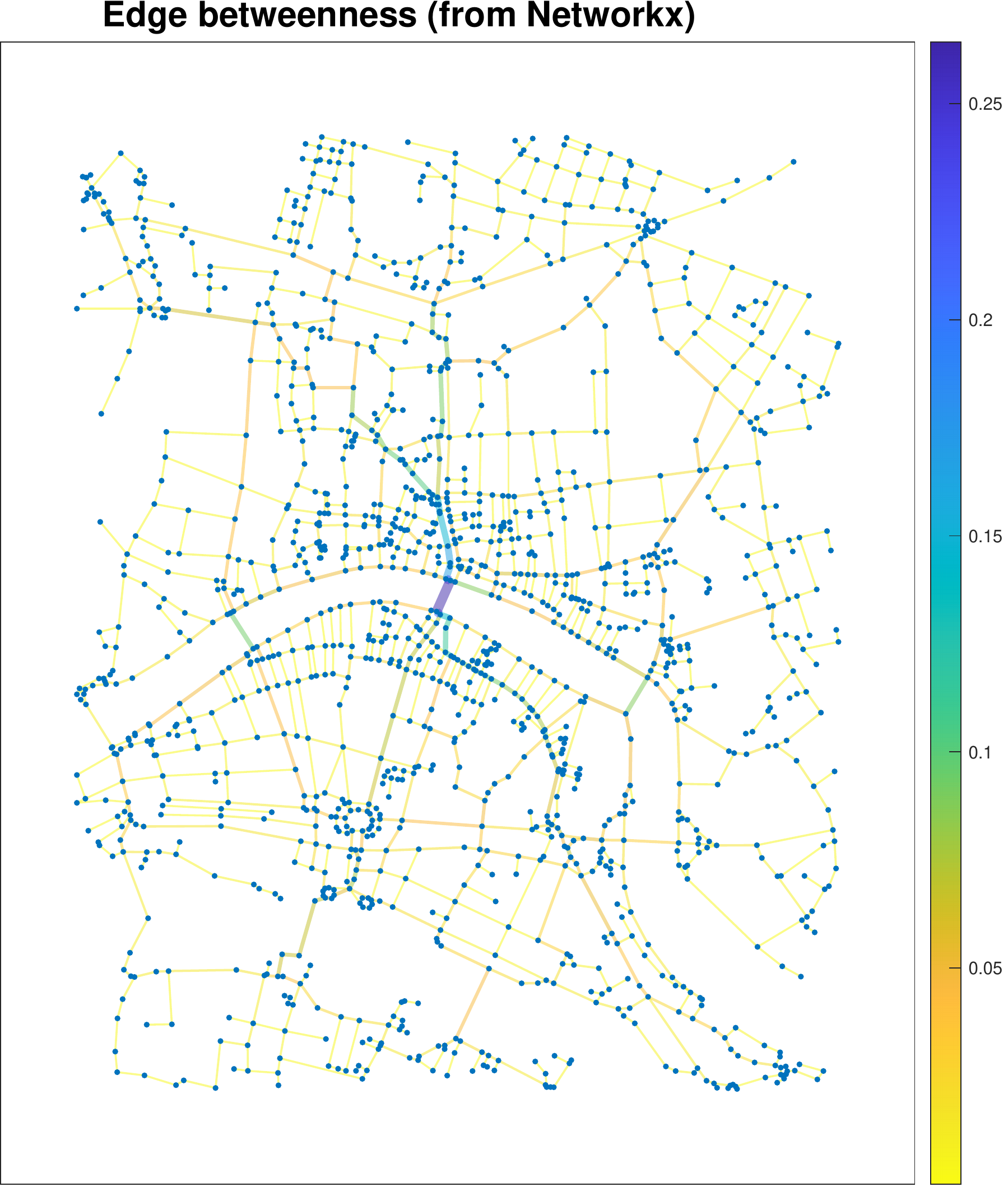} &
\includegraphics[width=0.45\textwidth]{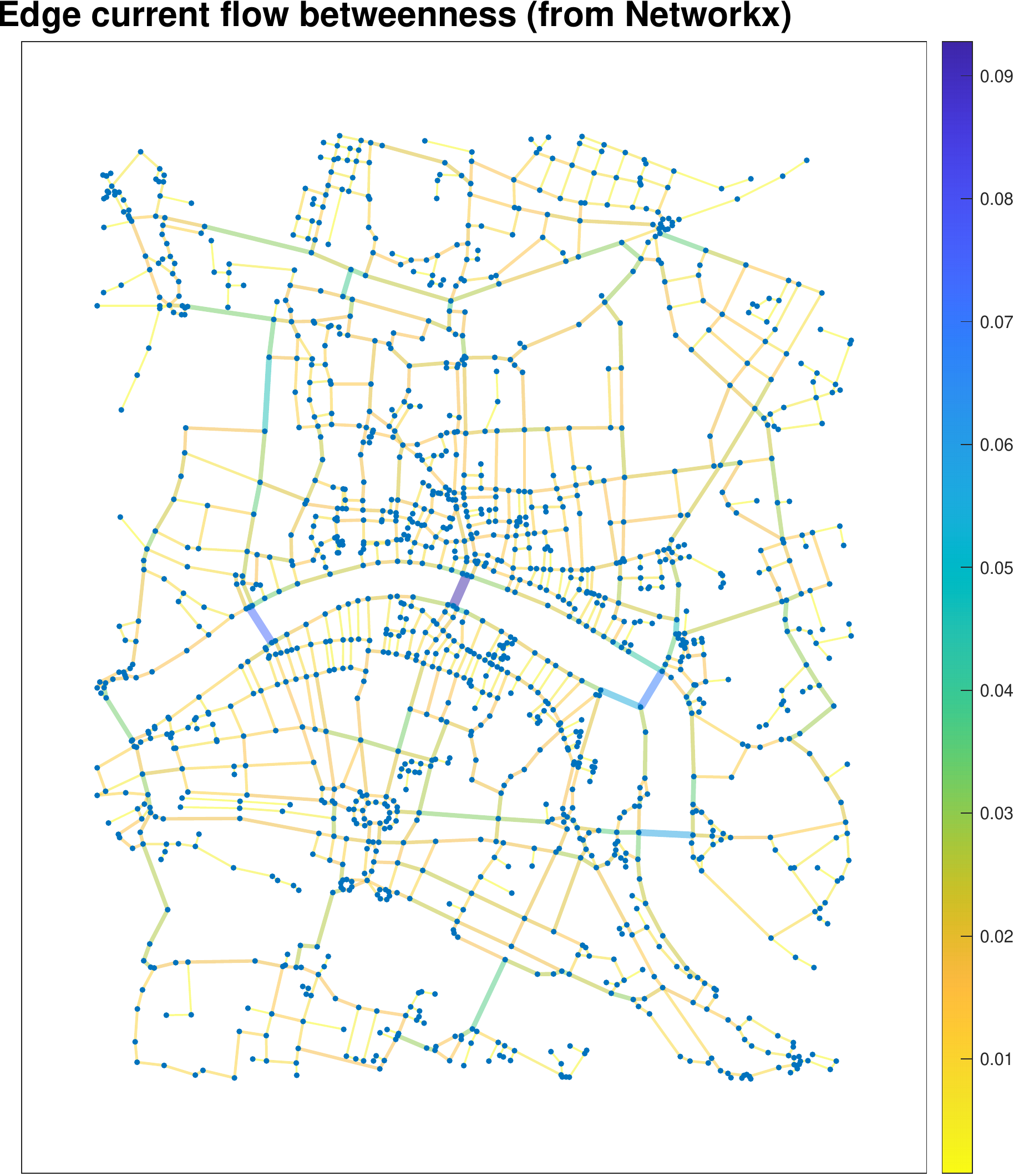}

\end{tabular}

\caption{Comparison of several centrality measures on a map of the Pisa city center; part II.} \label{fig:pisa2}
\end{figure}

A detailed comparison in terms of CPU time is difficult due to the very non-uniform state of the codebases for the algorithms that we have used and the different overheads in the two (interpreted) languages that we have used; nevertheless, we report the observed timings in Table~\ref{tab:timings}.
\begin{table}[]
    \centering
    \begin{tabular}{c|ccc}
    \toprule
        Algorithm & Matlab  & \begin{minipage}{2cm}Matlab  with\\ BGL~\cite{matlab_bgl} \end{minipage} & \begin{minipage}{2.8cm} Python 3.9.7 with\\ Networkx 2.4 \\ \cite{networkx} \end{minipage}\\
        \midrule
        Kemeny-based centrality & 0.40 s\\
        Pagerank & 0.001 s & & 0.19 s\\
        Pagerank dual & 0.001 s & & 0.1 s\\
        Betweenness dual & 0.04 s & 0.64 s & 6.21 s\\
        Edge betweenness & & 0.42 s & 6.51 s\\
        Edge current-flow betweenness & & & 7.3 s\\
        \bottomrule
    \end{tabular}
    \caption{CPU times for the various edge centrality algorithms on an Intel Core i5-1135G7 @ 2.40GHz laptop. The Matlab release is R2021a}
    \label{tab:timings}
\end{table}
In theoretical terms, the time complexity of the Kemeny-based centrality is comparable with the cost of one matrix inversion, or the solution of $n$ linear systems, which is $O(nm)$ when the Cholesky factor has $O(m)$ entries, as is the case for our road network examples. The complexity of edge current-flow betweenness is similar, as it is also based on the pseudoinverse of the Laplacian, while edge betweenness can be computed in $O(nm)$ as well for all networks. Measures computed using the dual graph have a similar cost because $m=O(n)$ for our road networks. Pagerank-based measures are significantly cheaper, as they require the solution of only one linear system instead of $n$.

Finally, we describe the results of an experiment at a much larger scale. We have used the same methodology to compute the Kemeny-based centrality of a road network of the Tuscany region, a very large graph with 1.22M nodes and 1.56M edges. Despite the large size of the network, the Cholesky factor $L$ is quite sparse, with only 3.36M nonzero entries, and it is computed in less than one second using Matlab. A much more challenging computation is the computation of the centralities of each edge, each one of which requires solving two triangular linear systems with $L$ and $L^T$. We have run this computation in parallel (using Matlab's \texttt{parfor}) on a machine with 12 physical cores with 3.4GHz speed each (Intel Xeon CPU E5-2643) and Matlab R2017a. The computation took 18 hours.

\section{Conclusions}\label{sec:con}
We have introduced a centrality measure for the edges of an undirected graph based on the variation of the Kemeny constant. This measure has been modified in order to avoid the Braess paradox. A regularization technique has been introduced for its computation; the technique allows one to detect cut-edges and to manage disconnected graphs. This Kemeny-based centrality can be expressed by means of the trace of suitable matrices, and its computation is ultimately reduced to the Cholesky factorization of a positive definite matrix, which is generally sparse. If the number of edges is huge, other techniques to estimate the trace of a matrix might be more appropriate, like the one proposed in \cite{ck} based on randomization. This is subject of further research.


\end{document}